\newcommand{\std}{\mathrm{std}}
\def\renewtheorem#1{
  \expandafter\let\csname#1\endcsname\relax
  \expandafter\let\csname c@#1\endcsname\relax
  \gdef\renewtheorem@envname{#1}
  \renewtheorem@secpar
}
\def\renewtheorem@secpar{\@ifnextchar[{\renewtheorem@numberedlike}{\renewtheorem@nonumberedlike}}
\def\renewtheorem@numberedlike[#1]#2{\newtheorem{\renewtheorem@envname}[#1]{#2}}
\def\renewtheorem@nonumberedlike#1{  
\def\renewtheorem@caption{#1}
\edef\renewtheorem@nowithin{\noexpand\newtheorem{\renewtheorem@envname}{\renewtheorem@caption}}
\renewtheorem@thirdpar
}
\def\renewtheorem@thirdpar{\@ifnextchar[{\renewtheorem@within}{\renewtheorem@nowithin}}
\def\renewtheorem@within[#1]{\renewtheorem@nowithin[#1]}
\newcommand{\RR}{{\mathbb R}}
\newcommand{\ZZ}{{\mathbb Z}}
\newcommand{\Mxi}{(M,\xi)}
\newcommand{\Sdom}{(\Sigma,\beta)}
\newcommand{\bbr}{\begin{rem}\em} 
	\newcommand{\eer}{\end{rem}}
\begin{document}
\title[A JSJ-type decomposition theorem for symplectic fillings]{A JSJ-type decomposition theorem for symplectic fillings of contact 3-manifolds}
\author{Austin Christian and Michael Menke}
\begin{abstract}
We establish a JSJ-type decomposition theorem for splitting exact symplectic fillings of contact 3-manifolds along \emph{mixed tori} --- these are convex tori satisfying a particular geometric condition.  As an application, we show that if $(M,\xi)$ is obtained from $(S^3,\xi_{\mathrm{std}})$ via Legendrian surgery along a knot which has been stabilized both positively and negatively, then $(M,\xi)$ has a unique exact filling.
\end{abstract}
\maketitle
\vspace{-5\medskipamount}

\section{Introduction}\label{sec:intro}
A fundamental question in contact geometry is to determine the symplectic fillings of a given contact manifold.  That is, to what extent does the boundary of a symplectic manifold determine its interior?  The goal of this paper is to explain how to decompose an exact or weak symplectic filling whose boundary contact manifold contains a \emph{mixed torus} --- a convex torus admitting certain bypasses.  We then use this decomposition result to show that certain contact manifolds which are obtained as Legendrian surgeries are uniquely exactly fillable.\\

Recall that an \emph{exact symplectic filling} of a contact 3-manifold $(M,\xi)$ is a four dimensional symplectic manifold-with-boundary $(W,\omega)$ such that $\partial W = M$, $\omega = d\alpha$ for some 1-form $\alpha$ on $W$, and $\alpha|_{\partial W}$ is a positive contact form for $\xi$.  We call $(W,\omega)$ a \emph{weak symplectic filling} of $(M,\xi)$ under the more relaxed hypotheses that $\partial W = M$ and $\omega|_\xi>0$.\\

Let us start with a partial list of known results classifying the number of exact symplectic fillings of a given contact manifold. A detailed survey can be found in \cite{ozbagci2015topology}.
\begin{itemize}
\item  (Eliashberg \cite{eliashberg1990filling}) The standard contact 3-sphere $(S^3,\xi_{\std})$ has a unique exact filling up to symplectomorphism. 

\item (Wendl \cite{wendl2010strongly}) The 3-torus $(T^3,\xi_1)$, where $\xi_1$ is the canonical contact structure on the unit cotangent bundle of $T^2$, has a unique exact filling up to symplectomorphism.  Stipsicz \cite{stipsicz2002gauge} previously showed that, up to homeomorphism, $(T^3,\xi_1)$ admits a unique exact filling. 

\item (McDuff \cite{mcduff1990structure}) The standard tight contact structure on $L(p,1)$ has a unique exact filling up to symplectomorphism when $p \neq 4$; for $p = 4$, there are precisely two exact fillings.

\item (Lisca \cite{lisca2008symplectic})  Each lens space $L(p,q)$ with its canonical contact structure admits finitely many exact symplectic fillings.  Lisca gives a precise catalog of these fillings.

\item (Plamenevskaya and Van Horn-Morris \cite{plamenevskaya2010planar}, Kaloti \cite{kaloti2013stein}) Lens spaces of the form $L(p(m+1)+1,m+1)$, with virtually overtwisted contact structures, admit unique exact fillings. The case $L(p,1)$ is shown in \cite{plamenevskaya2010planar} and the general case in \cite{kaloti2013stein}.

\item (Sivek and Van Horn-Morris \cite{sivek2017fillings}) Fillings for the unit cotangent bundle of an orientable surface are unique up to $s$-cobordism, and similar results for non-orientable surfaces were proven by Li and Ozbagci \cite{li2018fillings}. 

\item (Akhmedov, Etnyre, Mark, Smith \cite{akhmedov2008note}) It is not always the case that there is a unique exact filling, or even finitely many. 
\end{itemize}

Before stating our main theorem, let us introduce \emph{mixed tori}, and say what it means to split a contact manifold along a mixed torus.  We will call a convex torus $T\subset(M,\xi)$ a \emph{mixed torus} if there exists a neighborhood $T^2\times[0,2]$ of $T$ in $M$ so that $T=T^2\times\{1\}$, the restriction of $\xi$ to $T^2\times[0,2]$ is virtually overtwisted, and each of $T^2\times[0,1]$ and $T^2\times[1,2]$ is a basic slice.  (We will recall these contact geometry notions in Section~\ref{sec:background}.)\\

Now suppose $T^2\times[0,2]\subset (M,\xi)$ is a neighborhood witnessing the existence of a mixed torus $T = T^2\times\{1\}$, and identify $T$ with $\mathbb{R}^2/\mathbb{Z}^2$ in such a way that the dividing curves of $T$ have slope $\infty$.  For $i=0,1,2$, we will denote the slope of the dividing curves of $T^2\times\{i\}$ by $s_i$ --- so, for instance, $s_1=\infty$ --- and we may normalize the identification $T=\mathbb{R}^2/\mathbb{Z}^2$ so that $s_0=-1$.  With this identification understood, we will now define the closed contact manifold $(M',\xi')$ which results from \emph{splitting $M$ along $T$ with slope $s$}, for any integer $s\in\mathbb{Z}$.  Topologically, we obtain $M'$ by attaching a solid torus to $M\setminus T$ along each of the two torus components of $\partial(M\setminus T)=T_0\sqcup T_1$:
\[
M' = S_0 \cup_{\psi_0} (M\setminus T) \cup_{\psi_1} S_1.
\]
We will define the contact structure $\xi'$ to be $\xi$ on $M\setminus T$, while on $S_i\subset M'$ we use the unique tight contact structure determined by the characteristic foliation of $\partial S_i$.  The remaining ambiguity lies in the maps $\psi_i\colon\partial S_i\to T_i$.  These are chosen so that the image of the meridian of $S_i$ is a curve of slope $s$ in $T_i=\mathbb{R}^2/\mathbb{Z}^2$.\\

Notice that if $(M',\xi')$ is the result of splitting $(M,\xi)$ along $T$ with some slope, then we may recover $(M,\xi)$ from $(M',\xi')$ by removing a pair of solid tori and identifying the dividing sets and meridians of their boundary tori.  This relationship between $(M,\xi)$ and $(M',\xi')$ allows us to obtain a filling of $(M,\xi)$ from any filling of $(M',\xi')$ via \emph{round symplectic 1-handle attachment}.  A round symplectic 1-handle is a symplectic manifold-with-boundary diffeomorphic to $S^1\times D^1\times D^2$, carrying a Liouville vector field $Z$ which is inward-pointing along $S^1\times S^0\times D^2$ and outward-pointing along $S^1\times D^1\times S^1$.  The vector field $Z$ induces the standard contact structure on the boundary solid tori $S^1\times S^0\times D^2$, and is thus attached to a symplectic filling by identifying two standard solid tori in the boundary of the filling.  For instance, if $(W',\omega')$ is a strong symplectic filling of $(M',\xi')$, we may attach a round symplectic 1-handle to $(W',\omega')$ along standard neighborhoods of the core Legendrians $L_0\subset S_0$ and $L_1\subset S_1$ to obtain $(W,\omega)$, a strong symplectic filling of $(M,\xi)$.  This construction will be explained in greater detail in Section~\ref{sec:background}.\\

We are now prepared to state our main theorem, which says that every filling of $(M,\xi)$ may be constructed as above.

\begin{theorem} \label{thm:main-thm}
	Let $(M,\xi)$ be a closed, cooriented 3-dimensional contact manifold and let $(W,\omega)$ be an exact (respectively, weak) symplectic filling of $(M,\xi)$. If there exists a mixed torus $T \subset (M,\xi)$, witnessed by a neighborhood $T^2\times[0,2]$ with slopes $s_0=-1$, $s_1=\infty$, and $s_2$, then there exists a (possibly disconnected) symplectic manifold $(W',\omega')$ such that:
	\begin{itemize}
		\item $(W',\omega')$ is an exact (respectively, weak) filling of its boundary $(M',\xi')$;
		\item $(M',\xi')$ is the result of splitting $(M,\xi)$ along $T$ with some slope $s$ satisfying $0\leq s\leq s_2-1$;
		\item $(W,\omega)$ can be recovered from $(W',\omega')$ by round symplectic 1-handle attachment.
	\end{itemize}
\end{theorem}

\begin{remark}\label{remark:main-thm-hypotheses}
The condition that $T$ be a mixed torus is essential; the theorem is not true if one only assumes that $T$ is a convex torus with two homotopically essential dividing curves.
\end{remark}

\begin{remark}
An earlier version of this paper stated Theorem~\ref{thm:main-thm} for exact and strong symplectic fillings, rather than exact and weak symplectic fillings.  The authors thank Hyunki Min for observing that the result is not true for strong fillings (c.f. \cite[Theorem 1.4]{min2022strongly}), and that our proof can be modified to hold for weak fillings.
\end{remark}

There are two results we must mention here to properly contextualize Theorem~\ref{thm:main-thm}.  The first is due to Eliashberg \cite{eliashberg1990filling}, who showed that if $(M,\xi)$ is a closed contact 3-manifold obtained from $(M',\xi')$ (which may be disconnected) via a connected sum, then every symplectic filling of $(M,\xi)$ is obtained from such a filling of $(M',\xi')$ by attaching a Weinstein 1-handle.  Our result is in the same spirit, replacing the holomorphic discs of Eliashberg's proof with holomorphic annuli, and thus considering an embedded torus in $(M,\xi)$ rather than an embedded sphere.\\

This leads us to the next result of historical importance.  Work of Jaco-Shalen \cite{jaco1978new} and Johannson \cite{johannson1979homotopy} established the JSJ decomposition for irreducible, orientable, closed 3-manifolds.  For such a manifold $M$, this decomposition identifies a (unique up to isotopy) minimal collection $T\subset M$ of disjoint, incompressible tori for which each component of $M\setminus N(T)$ is either atoroidal or Seifert-fibered, where $N(T)\subset M$ is an open tubular neighborhood of $T$.  One can think of this decomposition as a toroidal version of the prime decomposition of 3-manifolds, which allows us to write every compact, orientable 3-manifold as the connected sum of a unique collection of prime 3-manifolds.  Though the analogy is admittedly strained --- the tori in the JSJ decomposition being incompressible, for instance --- the present paper takes its name from the fact that our result replaces the connected sums of Eliashberg with toroidal decompositions.\\

In order to apply Theorem~\ref{thm:main-thm}, we will observe that mixed tori naturally appear in contact manifolds which result from Legendrian surgery along a Legendrian knot which has been stabilized both positively and negatively.  This allows us to prove the following consequence of Theorem~\ref{thm:main-thm}.

\begin{theorem} \label{thm:twice-stabilized}
Let $L$ be an oriented Legendrian knot in a closed cooriented contact 3-manifold $(M,\xi)$. Let $(M',\xi')$ be the manifold obtained from $(M,\xi)$ by Legendrian surgery on $S_+S_-(L)$, where $S_+$ and $S_-$ are positive and negative stabilizations, respectively. Then every exact (respectively, weak) filling of $(M',\xi')$ is obtained from an exact (respectively, weak) filling of $(M,\xi)$ by attaching a symplectic 2-handle along $S_+S_-(L)$. 
\end{theorem}

In particular, the following corollary holds when $(M,\xi) = (S^3,\xi_{std})$, since $(S^3,\xi_{std})$ has a unique exact filling.

\begin{corollary} \label{cor:main-cor}
If $(M',\xi')$ is obtained from $(S^3,\xi_{std})$ by Legendrian surgery on $S_+S_-(L)$, then $(M',\xi')$ has a unique exact filling up to symplectomorphism.
\end{corollary}

\begin{remark}
The assumption in Corollary~\ref{cor:main-cor} that $L$ is stabilized both positively and negatively is crucial (c.f. Remark~\ref{remark:main-thm-hypotheses}).  For example, we may obtain $L(4,1)$ with its standard contact structure via Legendrian surgery along an unknot which has been positively (or negatively) stabilized twice.  But, as mentioned above, McDuff showed in \cite{mcduff1990structure} that $(L(4,1),\xi_{\mathrm{std}})$ has two distinct exact fillings.
\end{remark}

Kaloti and Li \cite{kaloti2013surgeries} had previously shown the uniqueness up to symplectomorphism of exact fillings of manifolds obtained from Legendrian surgery along certain 2-bridge and twist knots and their stabilizations.\\

Related results were established by Lazarev for higher dimensions in \cite{lazarev2020contact}. While not stated in quite the same manner, the main result of \cite{lazarev2020contact} involves surgery on loose Legendrians. We observe that in dimensions greater than or equal to 5 all Legendrians which have been stabilized near a cusp edge are loose; their analog in dimension 3 is a Legendrian which has been stabilized both positively and negatively.

\subsection*{Acknowledgments} The authors would like to thank John Etnyre, Ko Honda, Hyunki Min, and Burak Ozbagci for many helpful discussions and suggestions.  Additionally, the authors express sincere gratitude to an anonymous referee, whose careful reading of the paper led to many improvements.  The first author was partially supported by NSF grant DMS-1745583.

\section{Background}\label{sec:background}

\subsection{Contact geometry preliminaries} \label{convex}
A knot $L \subset (M,\xi)$ is called \emph{Legendrian} if it is everywhere tangent to the contact structure $\xi$. The front projection of a Legendrian knot in $(\mathbb{R}^3, \ker(dz - y dx))$ is its projection to the $xz$-plane.  We stabilize $L$ by locally adding a zigzag in its front projection.  There are two stabilizations of $L$ --- denoted $S_+(L)$ and $S_-(L)$ --- with front projections as given in Figure~\ref{fig:Stab}.\\

\begin{figure}
	\centering
	\includegraphics[width=3in]{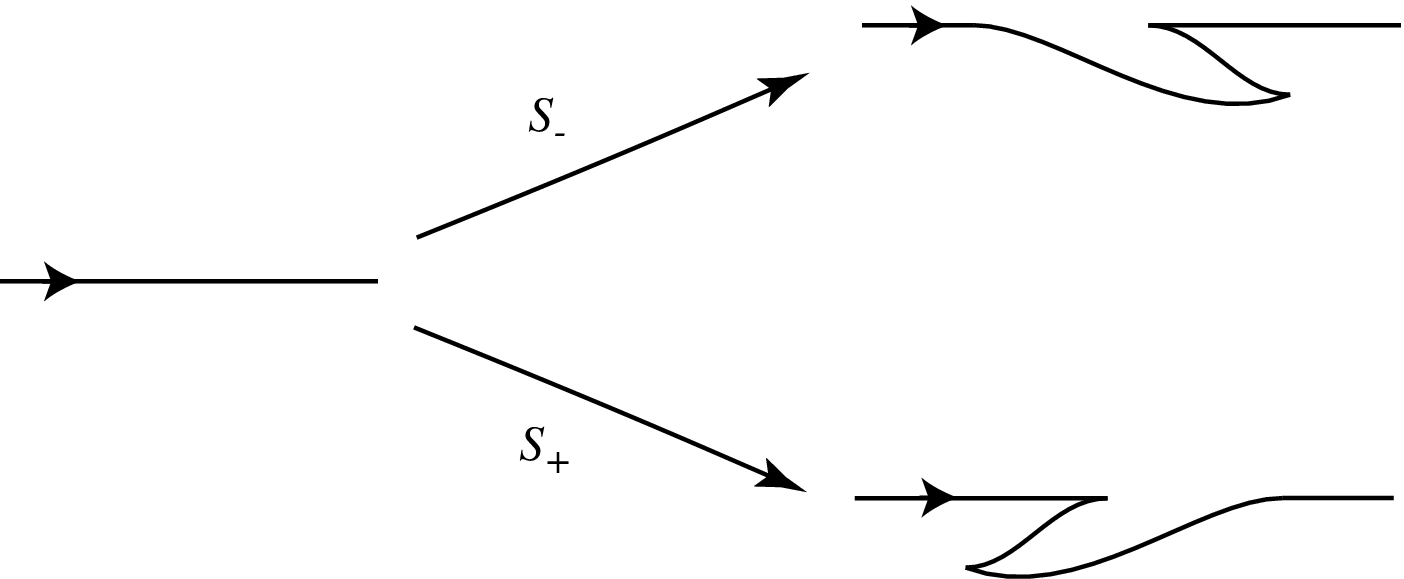}
	\caption{Stabilizations}
	\label{fig:Stab}
\end{figure}

Next, we recall notions of convexity as described by Giroux in \cite{giroux1991convexite}.  A properly embedded, oriented surface $\Sigma$ in $(M,\xi)$ is called \emph{convex} if there is a vector field $v$ transverse to $\Sigma$ whose flow preserves $\xi$.  We denote by $\Gamma_\Sigma(v)$ the set of points $x\in\Sigma$ where $v(x)$ lies in the contact plane $\xi(x)$.  The set $\Gamma_\Sigma(v)$ is a disjoint union of properly embedded smooth curves and arcs which are transverse to the \emph{characteristic foliation} $\xi|_\Sigma$.  If $\Sigma$ is closed, $\Gamma_\Sigma(v)$ will only contain closed curves, and the isotopy type of $\Gamma_\Sigma(v)$ will be independent of the choice of $v$.  For this reason, we will slightly abuse notation and refer to $\Gamma_\Sigma(v)$ as \emph{the dividing set of $\Sigma$} --- with no reference to $v$ --- and use the notation $\Gamma_\Sigma$.  In fact, when there is no ambiguity in $\Sigma$, we will write $\Gamma$ for $\Gamma_\Sigma$.\\

For any $x\in\Sigma\setminus\Gamma_\Sigma$, the vector $v(x)$ is transverse to both $T_x\Sigma$ and $\xi(x)$, and thus inherits an orientation from each of these spaces. We denote by $R_+$ and $R_-$ the regions in $\Sigma$ where these orientations agree and disagree, respectively, and write
\[
\Sigma\setminus\Gamma_\Sigma = R_+ \cup (-R_-).
\]
Finally, we recall that convexity is characterized by the existence of a neighborhood on which the contact structure is vertically invariant.  That is, every convex surface admits a standard neighborhood $\Sigma \times [-\epsilon,\epsilon] \subset (M,\xi)$ such that $\Sigma = \Sigma \times \{0\}$ and on this neighborhood $\alpha$ can be written as $\alpha = g\,dt + \beta$, where $g: \Sigma \to \mathbb{R}$ is a smooth function, $\beta$ is a 1-form on $\Sigma$, and $\Gamma = \{g = 0\}$.\\

As a final fundamental notion, recall that a \emph{standard neighborhood} $N(L)$ of a Legendrian knot $L$ is a sufficiently small tubular neighborhood of $L$ whose torus boundary is convex and whose dividing set has two components. If $S_\pm(L)$ is a stabilization of $L$, then $N(S_\pm(L))$ can be viewed as a subset of $N(L)$. Fix an oriented identification $\partial N(L) \simeq \mathbb{R}^2/\mathbb{Z}^2$ such that slope$(\Gamma_{\partial N(L)}) = \infty$ and slope$(\mathrm{meridian}) = 0$. Then slope$(\Gamma_{\partial N(S_\pm(L))}) = -1$.

\subsection{Bypasses}
In~\cite{honda2000classification}, Honda defined \emph{bypasses} to study the failure of convexity for a 1-parameter family of surfaces.  A \emph{bypass disk} $D$ for a Legendrian knot $L$ is a disk whose boundary is the union of two Legendrian arcs $a$ and $b$ such that
\begin{itemize}
	\item $a = L \cap D \subset L$.
	\item Along $a$ there are three elliptic singularities, two at the endpoints of $a$ with the same sign, and one in the middle with the opposite sign.
	\item Along $b$ there are at least 3 singularities all of the same sign.
	\item There are no other singularities in $D$.
\end{itemize}

\begin{remark}
We may define a new Legendrian knot $L'$ by smoothing $(L-a)\cup b$ and observe that $L$ is a stabilization of $L'$, with sign determined by the sign of the middle singularity on $a$.  We say that $D$ is a \emph{stabilizing disk} for $L'$.
\end{remark}
	   
The following theorem, due to Honda \cite{honda2000classification}, explains how a bypass changes the dividing set of a surface:

\begin{theorem}[{\cite[Lemma 3.12]{honda2000classification}}]\label{honda:bypass-nbhd}
Let $\Sigma$ be a convex surface, $D$ a bypass disk along $a \subset \Sigma$. Inside any open neighborhood of $\Sigma \cup D$ there is a one-sided neighborhood $\Sigma \times [0,1]$ such that $\Sigma = \Sigma \times \{0\}$ and $\Gamma_\Sigma$ is related to $\Gamma_{\Sigma \times \{1\}}$ by Figure~\ref{fig:bypass-honda}.
	
	\begin{figure}[!ht]
		\centering
		\includegraphics[width=3in]{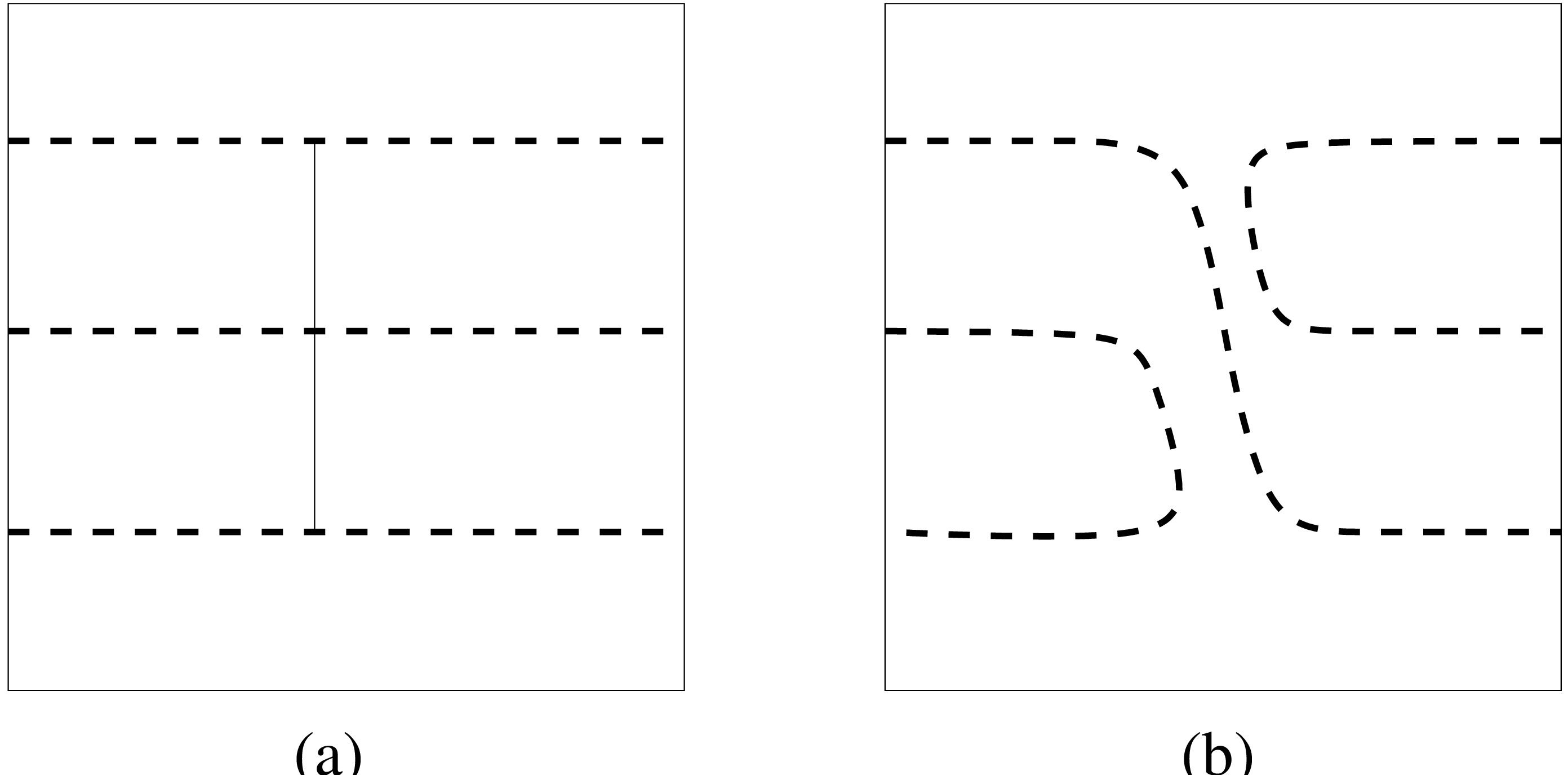}
		\caption{On the left is the dividing set of $\Sigma$ with solid attaching arc $a$. On the right is the result of bypass attachment.}
		\label{fig:bypass-honda}
	\end{figure}
\end{theorem}

When Theorem~\ref{honda:bypass-nbhd} is applied, we say that $\Sigma \times \{1\}$ is obtained from $\Sigma$ by a \emph{bypass attachment}. If the endpoints of the Legendrian arc $a$ lie on the dividing set $\Gamma$ of $\Sigma$ then we say the bypass is attached along $\Gamma$.

\subsection{Basic slices} \label{subsec:basic}
In the case where our surface is the torus $T^2 \simeq \mathbb{R}^2/\mathbb{Z}^2$, bypasses give rise to another fundamental object defined in \cite{honda2000classification}: the \emph{basic slice}.  Consider a tight $(T^2 \times I,\xi)$, where $I = [0,1]$, with convex boundary where both boundary components have two homotopically non-trivial dividing curves. Let $s_0$ and $s_1$ be the slopes of the dividing curves on $T^2 \times \{0\}$ and $T^2 \times \{1\}$, respectively, and let $v_i$ be a minimal length integral vector with slope $s_i$, for $i=0,1$.  We call $(T^2\times I,\xi)$ a \emph{basic slice} if
\begin{itemize}
	\item the vectors $v_0$ and $v_1$ form an integral basis for $\mathbb{Z}^2$;
	\item the slopes of all dividing curves on convex tori parallel to $T^2\times \{0,1\}$ have slopes on $[s_1,s_0]$ if $s_1<s_0$ and on $[s_1,\infty]\cup [-\infty,s_0]$ if $s_0< s_1$.
\end{itemize}
It was shown by Giroux~\cite{giroux2000structures} and Honda~\cite{honda2000classification} that there are exactly two tight contact structures on a given basic slice. They are distinguished by their relative Euler class.\\

Given that $T^2 \times [0,1]$ and $T^2 \times [1,2]$ are basic slices, we would like to know whether $T^2 \times [0,2]$ is universally tight. Let $s_0,s_1,s_2$, the slopes of the dividing sets on $T^2 \times \{0,1,2\}$, be $-2,-1,0$ respectively. Then $T^2 \times [0,2]$ is universally tight if the relative Euler class $e(\xi,s)$ is nonzero, where $s$ is a nowhere zero section of $\xi$ on the boundary.

\begin{definition}
A convex torus $T^2 \times \{1\} = T^2 \subset (M,\xi)$ is a \emph{mixed torus} if there exist basic slices $T^2 \times [0,1]$ and $T^2 \times [1,2]$ such that $T^2 \times [0,2]$ is not universally tight.
\end{definition}

In other words, $T^2$ is a mixed torus if the basic slices $T^2\times[0,1]$ and $T^2\times[1,2]$ are algebraically cancelling, in the sense that, with respect to a nowhere zero section of $\xi$ along $T^2\times\{0,1,2\}$, the relative Euler classes of $T^2\times[0,1]$ and $T^2\times[1,2]$ sum to zero.

\subsection{Contact handles} \label{subsec:contact-handles}
Let $D$ be a bypass disk, $\Sigma\subset(M,\xi)$ a convex surface. There is a correspondence between attaching $D$ to $\Sigma$ and attaching a pair of topologically canceling handles to a one-sided neighborhood $N(\Sigma)=\Sigma\times[0,\epsilon]$ of $\Sigma$.  In particular, the coordinates on $N(\Sigma)$ may be chosen so that $\xi$ has the form $\ker(dt+\beta)$, for some $\beta\in\Omega^1(\Sigma)$.  Then the boundary components $\Sigma\times\{0,\epsilon\}$ are convex, with dividing sets $\Gamma_\Sigma\times\{0,\epsilon\}$.  By attaching a \emph{contact 1-handle} followed by a \emph{contact 2-handle} to $N(\Sigma)$ along $\Sigma\times\{\epsilon\}$, we obtain a neighborhood of the form $\Sigma\times[0,1]$, as described in Theorem~\ref{honda:bypass-nbhd}.\\

We now give a brief description of standard models for contact handles of index 1 and 2.  While we follow closely the exposition in \cite{ozbagci2011contact}, the ideas are originally due to Giroux \cite{giroux1991convexite}.\\

Each of our standard models may be realized as a subset of $(\mathbb{R}^3,\ker\alpha)$, where $\alpha=dz+ydx+2xdy$.  Consider the subsets
\[
H_1=\{(x,y,z)\in\mathbb{R}^3|x^2+z^2\leq\epsilon,y^2\leq 1\}
\]
and
\[
H_2=\{(x,y,z)\in\mathbb{R}^3|x^2+z^2\leq 1,y^2\leq\epsilon\},
\]
for some small $\epsilon>0$.  We have a contact vector field
\[
Z=2x\frac{\partial}{\partial x}-y\frac{\partial}{\partial y}+z\frac{\partial}{\partial z}
\]
on $(\mathbb{R}^3,\ker\alpha)$ which witnesses the convexity of $\partial H_1$ and $\partial H_2$.  The respective dividing sets are
\[
\partial H_1\cap\{z=0\}
\quad\text{and}\quad
\partial H_2\cap\{z=0\}.
\]
Finally, we say that $(H_1,\ker\alpha)$ is our standard model for a contact 1-handle, with attaching region $\partial H_1\cap\{y=\pm 1\}$ --- where $Z$ is inward-pointing --- while $(H_2,\ker\alpha)$ is our standard contact 2-handle, attached using $-Z$, so that $\partial H_2\cap\{x^2+z^2=1\}$ is our attaching region.\\

As with topological 1-handle attachment, we attach a contact 1-handle to a 3-dimensional contact manifold-with-convex-boundary $(M,\xi)$ by identifying the attaching region of $(H_1,\ker\alpha)$ with regular neighborhoods of a pair of points $p,q\in\partial M$.  The difference from topological handle attachment is that we require this identification to identify the dividing sets of the regular neighborhoods with that of the attaching region.  This allows the contact structures on $(M,\xi)$ and $(H_1,\ker\alpha)$ to be identified, so that handle attachment yields a contact manifold.  Contact 2-handle attachment is analogous.

\subsection{Sutured contact manifolds}\label{subsec:sutured}
There are circumstances in which we want handle attachment to extend not only our contact structure $\xi$ on, but also a preferred contact form for $\xi$.  Gluing together contact manifolds-with-boundary in a manner which produces a compatible contact form is generally difficult, but \cite{colin2011sutures} gives us the language of \emph{sutured contact manifolds} with which we may perform this gluing.  Here we record --- and then proceed to abuse --- some of this language.\\

\begin{definition}
A \emph{sutured $3$-manifold} is a triple $(M,\Gamma,U(\Gamma))$, where
\begin{enumerate}
	\item $M$ is a compact, oriented 3-manifold-with-corners;
	\item $\Gamma\subset\partial M$ is a multicurve, called the \emph{suture};
	\item $U(\Gamma)\simeq[-1,1]\times[-1,0]\times\Gamma$ is a neighborhood of $\Gamma=\{(0,0)\}\times\Gamma$ in $M$ such that:
	\begin{enumerate}
		\item $U\cap\partial M = ([-1,1]\times\{0\}\times\Gamma)\cup(\{-1\}\times[-1,0]\times\Gamma)\cup(\{1\}\times[-1,0]\times\Gamma)$;
		\item $\partial M - ([-1,1]\times\{0\}\times\Gamma)$ is the disjoint union of a pair of surfaces $R_+(\Gamma)$ and $R_-(\Gamma)$, with the orientation of $\partial M$ agreeing with that of $R_+(\Gamma)$ and opposite that of $R_-(\Gamma)$;
		\item the orientation of $\Gamma$ agrees with the boundary orientation of $R_{\pm}(\Gamma)$;
		\item the corners of $M$ are $\{\pm 1\}\times\{0\}\times\Gamma$.
	\end{enumerate}
\end{enumerate}
\end{definition}

Note that this is the definition given in \cite[Section 2.3]{colin2011sutures}, which differs from the usual definition given by Gabai in \cite{gabai1983foliations}.\\

A prototypical sutured $3$-manifold may be constructed from a surface-with-boundary $\Sigma$ by defining $M=[-1,1]_t\times\Sigma$, $\Gamma=\{0\}\times\partial\Sigma$, and $U(\Gamma)=[-1,1]\times N(\partial\Sigma)$, for some collar neighborhood $N(\partial\Sigma)\subset\Sigma$.  Moreover, if $\beta$ is a Liouville form for $\Sigma$ which makes $N(\partial\Sigma)$ a symplectization over $\partial\Sigma$, then $\alpha=dt+\beta$ defines a contact form on $M$ such that each of $(R_+(\Gamma),\alpha|_{R_+(\Gamma)})$ and $(R_-(\Gamma),\alpha|_{R_-(\Gamma)})$ is a Liouville manifold and the Reeb vector field of $\alpha$ points from $R_-(\Gamma)$ to $R_+(\Gamma)$ along $U(\Gamma)$.  This example leads to the notion of a \emph{sutured contact manifold}.\\

\begin{definition}
Let $(M,\Gamma,U(\Gamma))$ be a sutured $3$-manifold, $\alpha$ a contact form on $M$, and $\gamma\subset\Gamma$ a component of the suture.  We say that $(M,\Gamma,U(\Gamma))$ is \emph{convex along $\gamma$ with respect to $\alpha$} if, using coordinates $(t,\tau,x)$ on the neighborhood $U(\gamma)=[-1,1]\times[-1,0]\times\gamma$, the following hold:
\begin{enumerate}
	\item $(R_+(\Gamma),\alpha|_{R_+(\Gamma)})$ and $(R_-(\Gamma),\alpha|_{R_-(\Gamma)})$ are Liouville manifolds;
	\item $\alpha=C\,dt+\beta$ inside $U(\Gamma)$, where $C>0$ is constant and $\beta$ is independent of $t$ and $\beta(\partial_t)=0$;
	\item $\partial_\tau$ is the Liouville vector field for $\beta$ in $U(\Gamma)$.
\end{enumerate}
We say that $(M,\Gamma,U(\Gamma))$ is \emph{concave along $\gamma$ with respect to $\alpha$} if the same conditions hold, except that for the second condition we instead have a coordinate system $([-2,2]_t\times[-1,1]_\tau - (-1,1)_t\times(0,1]_\tau)\times\gamma$ on $U(\gamma)$.\\
\end{definition}

The fundamental distinction between convex and concave boundary conditions on $(M,\Gamma,U(\Gamma))$ is whether the Reeb orbits of $\alpha$ along the suture run from $R_-(\Gamma)$ to $R_+(\Gamma)$ (in the convex case) or from $R_+(\Gamma)$ to $R_-(\Gamma)$ (in the concave case).  In both cases, the Reeb vector field is parallel to $\partial M$ along the suture.

\begin{remark}
In \cite{colin2011sutures}, Colin-Ghiggini-Honda-Hutchings define a sutured contact manifold\footnote{Note the absence of the adjective convex.} to be a sutured manifold for which every component of the suture is convex with respect to a fixed contact form; a \emph{concave} sutured contact manifold instead requires every component of the suture to be concave with respect to the contact form.  The definition we give here allows for contact manifolds with a mixture of concave and convex sutures, as these are natural to the argument we pursue.
\end{remark}

\subsection{Legendrian surgery}
Let $L$ be a Legendrian knot in $(M,\xi)$ with standard neighborhood $N(L)$. Topologically, Legendrian surgery is a $\mathrm{tb}(L) - 1$ Dehn surgery on $L$ and we then take care that the contact structures agree on the boundary.\\

More precisely, pick an oriented identification of $\partial N(L)$ with $\mathbb{R}^2/\mathbb{Z}^2$ so that $\pm(1,0)^T$ is the meridian and $\pm (0,1)^T$ corresponds to slope of $\Gamma_{N(L)}$. Identifying $\partial {M\setminus N(L)}$ with
$-\partial N(L)$, we can define maps
\[
\phi_\pm\colon\partial (D^2\times S^1)\to \partial ({M\setminus N(L)})
\]
on the topological level by 
\[
\phi(x,y)= \begin{pmatrix} 1 & 0\\ \pm 1 & 1\end{pmatrix}\begin{pmatrix} x \\ y \end{pmatrix}.
\] 
Let $M_\pm(L)$
be the manifold obtained by gluing $D^2\times S^1$ to $M\setminus N(L)$ using this map. The contact structure $\xi$ restricts to a contact structure $\xi|_{M\setminus N(L)}$ on $M\setminus N(L)$ and the two dividing curves on $\partial (M\setminus N(L)),$ as seen on $\partial (D^2\times S^1),$ represent $(\mp 1, 1)$ curves. Thus, according to \cite{kanda1997classification}, there is a unique tight contact structure on $D^2\times S^1$ having convex boundary with these dividing curves. Hence we may extend $\xi|_{M\setminus N(L)}$ to a contact structure 
$\xi_\pm$ on $M_\pm.$ The contact manifold $(M_\pm,\xi_\pm)$ is said to be obtained from $(M,\xi)$ by \emph{$\pm 1$-contact surgery} on $L$. The term \textit{Legendrian surgery} refers to $-1$-contact surgery. 

\subsection{Symplectization}
Let $(M,\xi)$ be a 3-dimensional contact manifold with contact form $\alpha$. The \emph{symplectization} of $(M,\xi)$ is the symplectic manifold $(\mathbb{R} \times M, d(e^s\alpha))$, where $s$ is the $\mathbb{R}$ coordinate. Given a strong symplectic filling $(W,\omega)$ of $(M,\xi)$, we can form the completion $(\widehat W,\widehat \omega)$ of $W$ by attaching $([0,\infty) \times M, d(e^s\alpha))$ to $M = \partial W$, where $\omega = d\alpha$ on $M \times \{0\}$. We will refer to $([0,\infty) \times M, d(e^s\alpha))$ as the \emph{symplectization part} and $(W,\omega)$ as the \emph{cobordism part} of the completion.

\subsection{Liouville hypersurfaces and convex gluing} \label{Avdek}
Theorem~\ref{thm:main-thm} relies on a result of Avdek \cite{avdek2021liouville}. This section reviews the necessary background for stating Avdek's result in the case of a 3-dimensional contact manifold $(M,\xi)$.\\

Recall that a \emph{Liouville domain} is a pair $\Sdom$ where
\begin{enumerate}
\item $\Sigma$ is a smooth, compact manifold with boundary,
\item $\beta\in\Omega^{1}(\Sigma)$ is such that $d\beta$ is a symplectic form on $\Sigma$, and
\item the unique vector field $Z_{\beta}$ satisfying $d\beta(Z_{\beta},\ast) = \beta$ points out of $\partial \Sigma$ transversely.
\end{enumerate}
The vector field $Z_{\beta}$ on $\Sigma$ described above is called the \emph{Liouville vector field} for $\Sdom$.

\begin{remark}
A Liouville domain is an exact filling of its boundary.
\end{remark}

Let $\Mxi$ be a 3-dimensional contact manifold and let $\Sdom$ be a 2-dimensional Liouville domain. A \emph{Liouville embedding} $i:\Sdom\rightarrow \Mxi$ is an embedding $i:\Sigma\rightarrow M$ such that there exists a contact form $\alpha$ for $\Mxi$ for which $i^{*}\alpha=\beta$. The image of a Liouville embedding will be called a \emph{Liouville hypersurface} and will be denoted by $\Sdom\subset\Mxi$.\\

Every Liouville hypersurface $\Sdom\subset \Mxi$ admits a neighborhood of the form
\begin{equation*}
N(\Sigma)=\Sigma\times[-\epsilon,\epsilon] \quad\text{on which}\quad \alpha = dt + \beta
\end{equation*}
where $t$ is a coordinate on $[-\epsilon,\epsilon]$.  After rounding the edges $\partial \Sigma\times\{\pm\epsilon\}$ of $\partial(\Sigma\times[-\epsilon,\epsilon])$, we obtain a neighborhood $\mathcal{N}(\Sigma)$ of $\Sigma$ for which $\partial \mathcal{N}(\Sigma)$ is a smooth convex surface in $\Mxi$, with contact vector field $t\partial_{t} + Z_{\beta}$ and dividing set $\lbrace 0 \rbrace \times \partial\Sigma$.\\

Fix a 2-dimensional Liouville domain $\Sdom$ and a (possibly disconnected) 3-dimensional contact manifold $\Mxi$.  Let $i_{1}$ and $i_{2}$ be Liouville embeddings of $\Sdom$ into $\Mxi$ whose images, which we will denote by $\Sigma_{1}$ and $\Sigma_{2}$, are disjoint.  Let $\alpha$ be a contact form for $\Mxi$ satisfying $i_1^*\alpha=i_2^*\alpha=\beta$.\\

Consider neighborhoods $\mathcal{N}(\Sigma_{1}),\mathcal{N}(\Sigma_{2})\subset M$ as described above.  Taking coordinates $(x,z)$ on the boundary of each such neighborhood, where $x\in \Sigma$, we may consider the mapping
\begin{equation*}\label{Eq:ConnectSum}
\Upsilon:\partial \mathcal{N}(\Sigma_{1})\rightarrow \partial \mathcal{N}(\Sigma_{2}),\quad \Upsilon(x,z)=(x,-z).
\end{equation*}
The map $\Upsilon$ sends
\begin{enumerate}
\item the positive region of $\partial \mathcal{N}(\Sigma_{2})$ to the negative region of $\partial \mathcal{N}(\Sigma_{1})$,
\item the negative region of $\partial \mathcal{N}(\Sigma_{1})$ to the positive region of $\partial \mathcal{N}(\Sigma_{2})$, and
\item the dividing set of $\partial \mathcal{N}(\Sigma_{1})$ to the dividing set of $\partial \mathcal{N}(\Sigma_{2})$
\end{enumerate}
in such a way that we may perform a \emph{convex gluing}.  In other words, the map $\Upsilon$ naturally determines a contact structure $\#_{(\Sdom,(i_{1},i_{2}))}\xi$ on the manifold
\[
\#_{(\Sigma,(i_{1},i_{2}))} M:= \Bigl( M \setminus \bigl(N(\Sigma_{1})\cup N(\Sigma_{2})\bigr)\Bigr) /\sim
\]
where $p\sim \Upsilon(p)$ for $p\in N(\Sigma_{1})$.  Avdek then proves the following in \cite{avdek2021liouville}:

\begin{theorem}[{\cite[Theorem 1.8]{avdek2021liouville}}] \label{Avdek2}
Let $(M,\xi)$ be a closed, possibly disconnected, contact 3-manifold.  Suppose that there are two Liouville embeddings $i_{1},i_{2}:(\Sigma,\beta) \rightarrow (M,\xi)$ with disjoint images.  Then there is an exact symplectic cobordism $(W,\omega)$ whose negative boundary is $(M,\xi)$ and whose positive boundary is $\#_{(\Sigma,\beta)}\ (M,\xi)$.
\end{theorem}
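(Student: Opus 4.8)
The plan is to realize $(W,\omega)$ as the trace of attaching a single \emph{Liouville handle} modeled on $(\Sigma,\beta)$ to a collar of $(M,\xi)$; this is the evident generalization of Weinstein $1$-handle attachment, recovered when $\Sigma$ is a point.

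First I would fix the model handle $H:=\Sigma\times[-1,1]_{x}\times[-1,1]_{y}$, equipped with the $1$-form $\lambda_{H}:=\beta+2x\,dy+y\,dx$. A direct computation gives $d\lambda_{H}=d\beta+dx\wedge dy$, which is symplectic on $H$ (since $(d\beta)^{n}$ is a volume form on $\Sigma$, its top power is $(n+1)(d\beta)^{n}\wedge dx\wedge dy\neq0$), and its Liouville vector field is $Z_{H}=Z_{\beta}+2x\,\partial_{x}-y\,\partial_{y}$. Because $Z_{\beta}$ points out of $\partial\Sigma$ transversely, $Z_{H}$ is inward-pointing precisely along the \emph{attaching region} $A:=\Sigma\times[-1,1]_{x}\times\{\pm1\}$ and outward-pointing along the complementary faces $\Sigma\times\{\pm1\}\times[-1,1]_{y}$ and $\partial\Sigma\times[-1,1]_{x}\times[-1,1]_{y}$; after rounding the corners of $H$, this exhibits $H$ as an exact symplectic cobordism with concave boundary $A$ and convex boundary $\partial_{+}H$.

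Next I would match $A$ to $(M,\xi)$. Restricting $\lambda_{H}$ to the two components $\Sigma\times[-1,1]_{x}\times\{1\}$ and $\Sigma\times[-1,1]_{x}\times\{-1\}$ of $A$ (where $dy\equiv0$) yields the contact forms $\beta+dx$ and $\beta-dx$. On the $M$ side, each neighborhood $\mathcal{N}(\Sigma_{j})$ carries coordinates with $\alpha=\beta+dt$ on $\Sigma\times[-\epsilon,\epsilon]_{t}$. Hence, after rescaling $x$, I can glue $\Sigma\times[-1,1]_{x}\times\{1\}$ to $\mathcal{N}(\Sigma_{1})$ via $x\mapsto t$ and $\Sigma\times[-1,1]_{x}\times\{-1\}$ to $\mathcal{N}(\Sigma_{2})$ via $x\mapsto -t$; the opposite signs here are exactly the map $\Upsilon(x,z)=(x,-z)$ defining the convex gluing, and the identifications match contact forms, not merely contact structures. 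I would then let $W$ be the result of attaching $H$ along $A\hookrightarrow\{1\}\times M$ to the collar $([0,1]\times M,\,d(e^{s}\alpha))$ and rounding the new edges. Because the identifications are chosen to match the Liouville forms, the forms $e^{s}\alpha$ and $\lambda_{H}$ (suitably rescaled) assemble to a global primitive for an exact symplectic form $\omega$ on $W$, with a Liouville field inward along $\{0\}\times M$ and outward along the rest of $\partial W$, so $\partial_{-}W=(M,\xi)$. The positive boundary is $\bigl(\{1\}\times M\setminus(N(\Sigma_{1})\cup N(\Sigma_{2}))\bigr)\cup\partial_{+}H$, and $\partial_{+}H$ is precisely the collar that realizes the quotient by $p\sim\Upsilon(p)$; comparing the induced contact forms identifies this contact manifold with $\#_{(\Sigma,(i_{1},i_{2}))}(M,\xi)$.

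The main obstacle is the corner-rounding in the previous step: $H$ has codimension-$2$ corners along $\Sigma\times\partial[-1,1]_{x}\times\partial[-1,1]_{y}$ and along $\partial\Sigma\times\partial([-1,1]_{x}\times[-1,1]_{y})$, and one must round these, as well as smooth the seam where $\partial_{+}H$ meets $\{1\}\times M\setminus(\mathcal{N}(\Sigma_{1})\cup\mathcal{N}(\Sigma_{2}))$, while keeping $Z_{H}$ (and the collar Liouville field) transverse to $\partial W$ with the correct sign, so that $\omega$ is genuinely of contact type on all of $\partial W$ and the induced contact structure on $\partial_{+}W$ agrees, up to isotopy, with the one defined by the convex gluing. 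All of this is insensitive to $\dim\Sigma$, so the argument runs verbatim for a $(2n+1)$-dimensional $M$ and a $2n$-dimensional $(\Sigma,\beta)$.
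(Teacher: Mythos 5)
Your construction is correct and is essentially the handle attachment the paper imports from Avdek: your $\bigl(\Sigma\times[-1,1]_x\times[-1,1]_y,\ \beta+2x\,dy+y\,dx\bigr)$ is exactly the symplectic handle $(H_\Sigma,\omega_\beta)=([-1,1]\times\mathcal{N}(\Sigma),\ d\theta\wedge dz+d\beta)$ of Section~\ref{Avdek} up to relabeling coordinates and a choice of primitive, attached to the compact symplectization $([0,1]\times M, d(e^s\alpha))$ along $\mathcal{N}(\Sigma_1),\mathcal{N}(\Sigma_2)\subset\{1\}\times M$, with the sign flip $x\mapsto -t$ realizing $\Upsilon$. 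Note that the paper does not reprove this statement but cites \cite{A} for the details (corner rounding, matching of Liouville forms up to rescaling, identification of $\partial_+W$ with the convex gluing), which are precisely the points you flag as remaining bookkeeping.
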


Avdek's cobordism between $(M,\xi)$ and $\#_{((\Sigma,\beta),(i_1,i_2))}(M,\xi)$ is constructed by attaching what he calls a \emph{symplectic handle} to the compact symplectization $([0,1]\times M,d(e^s\alpha))$ of $(M,\xi)$.  Up to edge rounding, the symplectic handle modeled on the Liouville hypersurface $(\Sigma,\beta)$ has the form
\[
(H_\Sigma,\omega_\beta) = ([-1,1]\times\mathcal{N}(\Sigma),d\theta\wedge dz+d\beta),
\]
where $\theta$ is the coordinate on $[-1,1]$ and $\mathcal{N}(\Sigma)$ is an abstract copy of the neighborhood of $\Sigma$ described above.  This handle admits a vector field $V_\beta$ satisfying $\mathcal{L}_{V_\beta}\omega_\beta=\omega_\beta$ and which points transversely out of $H_\Sigma$ along $[-1,1]\times\partial\mathcal{N}(\Sigma)$ and into $H_\Sigma$ along $\{\pm 1\}\times\mathcal{N}(\Sigma)$.  This allows us to attach $(H_\Sigma,\omega_\beta)$ to the compact symplectization of $(M,\xi)$ along the neighborhoods $\mathcal{N}(\Sigma_1),\mathcal{N}(\Sigma_2)\subset\{1\}\times M$.  For full details, see \cite{avdek2021liouville}.

\begin{remark}
In case $(\Sigma,\beta)=(DT^*S^1,\lambda_{can})$, we call $(H_\Sigma,\omega_\beta)$ a \emph{round symplectic 1-handle}.  Attaching a round symplectic 1-handle to a symplectic cobordism is equivalent to attaching a Weinstein 1-handle, followed by a Weinstein 2-handle which passes over the 1-handle.
\end{remark}

\subsection{The role of slope in splitting along a torus}\label{subsec:tightness-splitting}
The result of splitting a tight contact manifold $(M,\xi)$ along a mixed torus $T$ with some slope $s$ is a contact manifold $(M',\xi')$ which need not be tight.  However, we may use the slopes $s_0=-1$, $s_1=\infty$, and $s_2$ of the neighborhood $T^2\times[0,2]$ hypothesized by Theorem~\ref{thm:main-thm} to determine a finite list of slopes $s$ for which $(M',\xi')$ will be tight.\\

Let us write $M'$ as
\[
M'=S_0\cup_{\psi_0} (M\setminus T)\cup_{\psi_1} S_1,
\]
where each of $S_0$ and $S_1$ is a solid torus, and the maps $\psi_i\colon\partial S_i\to\partial(M\setminus T)$ are chosen so that the image of the meridian of $S_i$ is a curve of slope $s$.  In this decomposition, the basic slice $T^2\times[1,2]$ with which we began abuts $S_0$ at $T_0:=\partial S_0$, while $T^2\times[0,1]$ abuts $S_1$ at $T_1:=\partial S_1$.\\

Now consider the Legendrian knot $L_0\subset S_0$ given by the core of $S_0$.  Along $L_0$, the contact planes will have slope $s$, and as we move towards $\partial S_0$, these planes will make clockwise rotations towards the slope $s_1=\infty$, as the dividing curves of $\partial S_0$ have this slope.  As we then continue pushing through the basic slice $T\times[1,2]$, the contact planes continue their clockwise rotation towards $s_2$.  Altogether, in this portion of $(M',\xi')$, the slope of the contact planes rotates from $s$ to $\infty$ to $s_2$.  If $s>s_2$, then this rotation passes through an angle in excess of $\pi$, meaning that $\xi'$ is overtwisted.  Moreover, if $s=s_2$, then restricting $\xi'$ to the union of $S_0$ with $T^2\times[1,2]$ produces a solid torus whose boundary dividing curves are meridional.  Such a solid torus is necessarily overtwisted, so if $\xi'$ is tight, then $s<s_2$.  Similar reasoning applied to $S_1$ and $T^2\times[0,1]$ shows that $s> s_0=-1$, meaning that $0\leq s\leq s_2-1$, provided $\xi'$ is a tight contact structure.

\begin{remark}
Recall that basic slices can be grouped into \emph{continued fraction blocks}, as per \cite[$\S$ 4.4.5]{honda2000classification}, which are tight structures on $T^2\times[0,m]$ with boundary slopes $s_0=-1$ and $s_m=-1-m$, for some integer $m\geq 1$.  This tight structure decomposes into $m$ basic slices $T^2\times[k-1,k]$, $1\leq k\leq m$ and the tori along which basic slices meet have slopes $s_k=-1-k$.  Notice that if $(M,\xi)$ contains a virtually overtwisted continued fraction block, then at least one of the interior tori $T:=T^2\times\{k\}$, $1\leq k\leq m-1$, is mixed.  By applying the change of coordinates
\[
\left(\begin{matrix}
k+1 & 1\\
-1 & 0
\end{matrix}\right)
\]
we ensure that the slope of $T$ is $\infty$, while adjacent slopes in a basic slice decomposition are $-1$ and $1$.  In the notation used above, we see that $s_0=-1$, $s_1=\infty$, and $s_2=1$, meaning that there is a unique splitting slope which produces a tight $(M',\xi')$.
\end{remark}

Because Theorem~\ref{thm:main-thm} obtains $(M',\xi')$ from $(M,\xi)$ by splitting along a torus and relates fillings of these contact manifolds by round symplectic 1-handle attachment, it is natural to wonder whether the attaching link $L_0\sqcup L_1$ for a round symplectic 1-handle determines a splitting slope for the resulting contact manifold.  That is, if attaching a round symplectic 1-handle to $(W',\omega')$ along $L_0\sqcup L_1$ yields $(W,\omega)$, we would like to obtain $\partial(W',\omega')$ by splitting $\partial(W,\omega)$ along the (not-necessarily-mixed) belt torus of the round 1-handle with some particular slope.  Before describing the answer to this question, let us observe a certain asymmetry in the definition of splitting along a torus: when splitting along $T$ with slope $s$, we normalize our identification of $T$ with $\mathbb{R}^2/\mathbb{Z}^2$ so that its dividing curves have slope $\infty$, and so that the dividing curves on the opposite component of a basic slice have slope $-1$.  We could describe the same splitting via a different integer $s$ by using the basic slice on the \emph{other} side of $T$ to normalize our slopes.\\

With this ambiguity addressed, we describe the process of determining the slope $s$.  First, the Legendrian knots $L_0,L_1$ admit standard neighborhoods $N(L_0)$, $N(L_1)$ whose boundaries have dividing slope $1/\mathrm{tb}(L_0)$ and $1/\mathrm{tb}(L_1)$, respectively, where the meridians have slope 0.  Adjacent to $N(L_0)$ is a basic slice whose opposite boundary component has slope $p/q$, for some integers $p,q\geq 1$ with $np-q=1$, and we claim that splitting $\partial(W,\omega)$ along a torus with slope $p-1$ will yield $\partial(W',\omega')$.  Indeed, the transformation
\[
\left(\begin{matrix}
1 & -n\\
p-1 & n-q
\end{matrix}\right)
\]
normalizes the slope of $\partial N(L_0)$ to be $\infty$, while the slope of the opposite boundary component of the adjacent basic slice becomes $-1$.  Under this normalization, the meridian slope of $N(L_0)$ becomes $p-1$, and it is this meridian slope which is named in the recipe for splitting along a torus.

\section{Proof of Theorem~\ref{thm:main-thm}}
Throughout this section we consider $(M,\xi=\ker\alpha)$, a contact manifold with an exact symplectic filling $(W,\omega)$ and mixed torus $T \subset M$. Let $(\widehat W, \widehat \omega)$ be the completion of $(W,\omega)$ and $J$ an adapted almost complex structure on $\widehat W$.  That is,
\begin{itemize}
	\item on $((0,\infty)_s\times M,d(e^s\alpha))$, $J$ is $s$-invariant and satisfies $J\partial_s=R_\alpha$ and $J\xi=\xi$;
	\item on $W\subset\widehat{W}$, $J$ is $\omega$-compatible.
\end{itemize}
During the proof of Theorem~\ref{thm:main-thm} we will need to specify a contact form $\alpha$ and impose additional conditions on $J$, but regularity will still be ensured by the automatic transversality results of Wendl \cite{wendl2008automatic}.\\

The proof of Theorem~\ref{thm:main-thm} proceeds as follows. First we will construct a 1-parameter family
\[
\mathcal{S} = \{u_t : (\mathbb{R} \times S^1,j) \to (\widehat W, J) | du_t \circ j = J \circ du_t, t \in \RR\}
\]
of finite-energy embedded holomorphic cylinders in $(\widehat W,\widehat\omega)$ such that
\begin{enumerate}
	\item[(C1)] When $t \gg 0$ the images $\Sigma_t$ and $\Sigma_{-t}$ of the curves $u_t$ and $u_{-t}$ are in the symplectization $[0,\infty) \times M$.
	
	\item[(C2)] When $t \gg 0$ the projections of $\Sigma_{\pm t}$ under the map $\pi : [0,\infty) \times M \to M$ are $R_+(\tilde{T})$ and $R_-(\tilde{T})$ respectively, where $\tilde{T}\subset M$ is a convex torus isotopic to $T$ through convex tori.
	
	\item[(C3)] $\mbox{Im}(u_t)\cap \mbox{Im}(u_{t'})=\emptyset$ if $t\not = t'$.
\end{enumerate}
We then show that $S = \cup_{t \in \RR} \Sigma_t$ sweeps out a properly embedded solid torus in $(\widehat W, \widehat \omega)$. Finally, we cut $W$ along the solid torus $S' = W \cap S$ and modify the result to obtain a new exact filling.\\

Our first step is to standardize the contact form and almost complex structure on a neighborhood of $T$.  As with contact forms constructed by other authors (c.f. \cite{wendl2010open}, \cite{vaugon2015reeb}, \cite{colin2011sutures}), our form is designed to make possible the holomorphic curve counts in which we are interested.

\begin{lemma}\label{lemma:convex-torus-nbhd}
Let $T\subset(M,\xi)$ be a convex torus in a contact manifold, and assume that the dividing set $\Gamma_T$ consists of two components, which we denote $e_1$ and $e_2$.  Let $a>K>2$ be arbitrary constants.  There is a sutured neighborhood $(N,\Gamma,V(\Gamma))$ of $T$ in $M$ and contact form $\alpha$ for $\xi$ such that:
\begin{enumerate}
	\item $N$ is homeomorphic to $T\times[-\epsilon,\epsilon]$;
	\item the boundary $\partial N$ is concave sutured with respect to $\alpha$ in the component of $V(\Gamma)$ corresponding to $e_1$ and convex sutured with respect to $\alpha$ in the component of $V(\Gamma)$ corresponding to $e_2$;
	\item the components $e_1,e_2$ of $\Gamma_T$ are nondegenerate, elliptic Reeb orbits of Conley-Zehnder index 1;
	\item the actions of $e_1$ and $e_2$ with respect to $\alpha$ are $K$;
	\item the only other Reeb orbits in $N$ of action smaller than $a$ are a pair of hyperbolic orbits $h_2$, $h_2'$ parallel to $e_2$.
\end{enumerate}
\end{lemma}
\begin{proof}
By the flexibility theorem, modulo a perturbation of the convex surface $T$, it suffices to construct an explicit model subject to the condition that $\Gamma_{T}$ consists of two parallel curves of slope $\infty$.  We will construct this model in three steps:  First, we explicitly define a form $\alpha$ on neighborhoods of $e_1$ and $e_2$, then on neighborhoods of the positive and negative regions of $T$.  The result is a concave sutured neighborhood of $T$, so our third step is to perform a concave-to-convex modification near $e_2$.\\

For our first step, we borrow heavily from the proof of \cite[Lemma 4.10]{colin2011sutures}.  Let us denote by $S^1\times D^2$ a component of the neighborhood $N(\Gamma_{T})$, with coordinates $(\theta,\rho,\phi)$ chosen in such a way that $N(\Gamma_{T})\cap T = \{\phi=0\}\cup\{\phi=1/2\}$.  We will define a contact form on $S^1\times D^2$ by carefully choosing functions $f,g\colon [0,1] \to\mathbb{R}$ and declaring $\alpha=f\,d\theta + g\,d\phi$.  The contact condition requires that the quantity
\[
D := f(\rho)\,g'(\rho) - g(\rho)\,f'(\rho)
\]
be everywhere positive, and the Reeb vector field for $\alpha$ is given by
\[
R_\alpha = \dfrac{1}{D}\left(g'(\rho)\,\partial_\theta - f'(\rho)\,\partial_\phi\right).
\]
We now choose $f,g\colon[0,1]\to\mathbb{R}$ such that, for some $\delta>0$,
\begin{enumerate}[label=(\arabic*)]
	\item for $\rho\in[0,1-3\delta]$, $f(\rho)=K(1-\rho^2)$ and $g(\rho)=2a\,\rho^2$;\label{cond:near-core}
	\item for $\rho\in[1-3\delta,1-2\delta]$, $f'(\rho)<0$ and $0<g'(\rho)$;
	\item for $\rho\in[1-2\delta,1]$, $f'(\rho)<0$ and $g(\rho)=2a$;
	\item for $\rho\in[1-\delta,1]$, $f(\rho)=e^{1-\rho}$ and $g(\rho)=2a$.\label{cond:boundary}
\end{enumerate}
Notice that the conditions on $f$ require $3\delta(2-3\delta)\,K>e^\delta$; our hypothesis that $K>2$ allows us to choose $0<\delta<1/3$ such that this inequality is satisfied.  Notice that along $\rho=0$ we have $R_\alpha = \tfrac{1}{K}\,\partial_\theta$, and thus $\{\rho=0\}$ is a Reeb orbit of action $K$, as desired.  For $0 < \rho\leq 1-3\delta$, $R_\alpha=\tfrac{1}{K}\,\partial_\theta + \tfrac{1}{2a}\,\partial_\phi$, and thus any Reeb orbit in this region has action at least $2a$, this being the amount of time required to traverse the $\phi$-direction once.  For $1-3\delta\leq\rho\leq 1-2\delta$, the coefficient of $R_\alpha$ in the $\partial_\phi$-direction is smaller than $1/g$, and for $1-2\delta\leq \rho\leq 1$ this coefficient is exactly $1/g$.  In either case, this coefficient is on the order of $1/(2a)$, and thus any Reeb orbit has action on the order of at least $2a$. We conclude that the only Reeb orbit of action less than $a$ is $\{\rho=0\}$, which has action $K$.  The fact that $\{\rho=0\}$ is nondegenerate with Conley-Zehnder index 1 follows from condition~\ref{cond:near-core} and the fact that $K/(2a)<1$.\\

\begin{figure}
	\centering
	\begin{tikzpicture}[scale=0.15]
\begin{scope}
\clip (-35,-7) rectangle (35,7);

\draw [black, dashed] (-21,0) circle (6);
\draw [black, dashed] (21,0) circle (6);

\draw [black, thick] (-42,0) -- (42,0);


\draw [black, dashed] (-26.315,2.784) -- (-42,2.784);
\draw [black, dashed] (-15.685,2.784) -- (15.685,2.784);
\draw [black, dashed] (26.315,2.784) -- (42,2.784);

\draw [black, dashed] (-26.315,-2.784) -- (-42,-2.784);
\draw [black, dashed] (-15.685,-2.784) -- (15.685,-2.784);
\draw [black, dashed] (26.315,-2.784) -- (42,-2.784);

\draw [black,->] (-38,2) -- (-38,-2);
\draw [black,->] (-34,2) -- (-34,-2);
\draw [black,->] (-30,2) -- (-30,-2);
\draw [black,->] (-12,-2) -- (-12,2);
\draw [black,->] (-8,-2) -- (-8,2);
\draw [black,->] (-4,-2) -- (-4,2);
\draw [black,->] (0,-2) -- (0,2);
\draw [black,->] (4,-2) -- (4,2);
\draw [black,->] (8,-2) -- (8,2);
\draw [black,->] (12,-2) -- (12,2);
\draw [black,->] (30,2) -- (30,-2);
\draw [black,->] (34,2) -- (34,-2);
\draw [black,->] (38,2) -- (38,-2);

\draw [black,->] (-16,-2) -- (-16,2);
\draw [black,->] (-19,5) -- (-23,5);
\draw [black,->] (-26,2) -- (-26,-2);
\draw [black,->] (-23,-5) -- (-19,-5);
\begin{scope}[rotate around={45:(-21,0)}]
\draw [black,->] (-18,-1) -- (-18,1);
\draw [black,->] (-20,3) -- (-22,3);
\draw [black,->] (-24,1) -- (-24,-1);
\draw [black,->] (-22,-3) -- (-20,-3);
\end{scope}
\draw [black,->] (-20,-0.5) -- (-20,0.5);
\draw [black,->] (-20.5,1) -- (-21.5,1);
\draw [black,->] (-22,0.5) -- (-22,-0.5);
\draw [black,->] (-21.5,-1) -- (-20.5,-1);

\begin{scope}[xshift=42cm,yshift=0cm]
\draw [black,<-] (-16,-2) -- (-16,2);
\draw [black,<-] (-19,5) -- (-23,5);
\draw [black,<-] (-26,2) -- (-26,-2);
\draw [black,<-] (-23,-5) -- (-19,-5);
\begin{scope}[rotate around={45:(-21,0)}]
\draw [black,<-] (-18,-1) -- (-18,1);
\draw [black,<-] (-20,3) -- (-22,3);
\draw [black,<-] (-24,1) -- (-24,-1);
\draw [black,<-] (-22,-3) -- (-20,-3);
\end{scope}
\draw [black,<-] (-20,-0.5) -- (-20,0.5);
\draw [black,<-] (-20.5,1) -- (-21.5,1);
\draw [black,<-] (-22,0.5) -- (-22,-0.5);
\draw [black,<-] (-21.5,-1) -- (-20.5,-1);
\end{scope}

\draw [blue, opacity=0.5] (20.5,5.979) -- (20.5,4.866);
\draw [blue, opacity=0.5] (20.5,4.866) arc (120:420:1);
\draw [blue, opacity=0.5] (21.5,5.979) -- (21.5,4.866);
\draw [blue, opacity=0.5] (21.886,0.464) arc (27.65:152.35:1);
\draw [blue, opacity=0.5] (21.886,0.464) -- (26.315,2.784);
\draw [blue, opacity=0.5] (20.114,0.464) -- (15.685,2.784);
\draw [blue, opacity=0.5] (26.315,2.784) arc (27.65:85.22:6);
\draw [blue, opacity=0.5] (15.685,2.784) arc (152.35:94.78:6);

\begin{scope}[xscale=1,yscale=-1]
\draw [blue, opacity=0.5] (20.5,5.979) -- (20.5,4.866);
\draw [blue, opacity=0.5] (20.5,4.866) arc (120:420:1);
\draw [blue, opacity=0.5] (21.5,5.979) -- (21.5,4.866);
\draw [blue, opacity=0.5] (21.886,0.464) arc (27.65:152.35:1);
\draw [blue, opacity=0.5] (21.886,0.464) -- (26.315,2.784);
\draw [blue, opacity=0.5] (20.114,0.464) -- (15.685,2.784);
\draw [blue, opacity=0.5] (26.315,2.784) arc (27.65:85.22:6);
\draw [blue, opacity=0.5] (15.685,2.784) arc (152.35:94.78:6);
\end{scope}


\filldraw[blue, opacity=0.5] (21,2) circle (0.35);
\filldraw[black] (-21,0) circle (0.35);
\filldraw[black] (21,0) circle (0.35);
\filldraw[blue, opacity=0.5] (21,-2) circle (0.35);
\end{scope}

\node at (-40,0) {\small $T^2\times\{0\}$};
\node at (40,0) {\tiny $\Gamma=\{e_1,e_2\}$};

\end{tikzpicture}
	\caption{The sutured neighborhood constructed in Lemma~\ref{lemma:convex-torus-nbhd}.  The keyhole-shaped regions near $e_2$ result from applying a concave-to-convex modification on a Reeb flowbox, each containing a hyperbolic orbit parallel to $e_2$.}
	\label{fig:orbits-near-t}
\end{figure}

Next, we define a contact form on neighborhoods of the positive and negative regions of $T$.  Consider the regions
\[
R_\pm' := R_\pm - N(\Gamma_{T}).
\]
These naturally produce Weinstein domains $(R_\pm',\beta_\pm)$ (though the orientation of $R_-'$ as a Weinstein domain is opposite the orientation coming from $T$), and thus we may write $\alpha=\pm(dt+\beta_\pm)$ on $R_\pm'\times[-\epsilon,\epsilon]$.  Condition~\ref{cond:boundary} ensures that, along $\partial(S^1\times D^2)$, the Reeb vector field is given by $\tfrac{1}{2a}\partial_\phi$.  As a result, we may glue the neighborhoods $R_\pm'\times[-\epsilon,\epsilon]$ to $N(\Gamma_{T})$ in such a way that the coordinate $t$ is identified with $\phi$, and thus obtain a sutured neighborhood of $T$ whose boundary is concave sutured with respect to $\alpha$ near both $e_1$ and $e_2$.  See Figure~\ref{fig:orbits-near-t}.\\

Our final step is to perform a concave-to-convex modification near $e_2$, as in \cite[Proposition 4.6]{colin2011sutures}.  With coordinates as before, we identify a region $S^1\times [1-\delta,1]_\rho\times[0,\phi_1]\subset S^1\times D^2$ in $N(e_2)$ to which neither neighborhood $R_{\pm}'\times[-\epsilon,\epsilon]$ has been glued; see Figure~\ref{fig:orbits-near-t}.  This region is a Reeb flowbox as considered by Colin-Ghiggini-Honda-Hutchings in \cite[Proposition 4.6]{colin2011sutures}.  The concave-to-convex modification is then carried out by perturbing the coefficient $f=e^{1-\rho}$ of $d\theta$ in $\alpha=f\,d\theta+g\,d\phi$ as a function of $\rho$ and $\phi$, introducing a canceling pair $h,e$ of critical points.  Each of these critical points of $f$ produces a Reeb orbit parallel to $e_2$, and by excavating a neighborhood of the Reeb orbit corresponding to $e$, Colin-Ghiggini-Honda-Hutchings produce a convex sutured boundary whose convex smoothing is isotopic to the convex smoothing of our original concave sutured boundary.  By applying this modification along each of the two boundary sutures corresponding to $e_2$, we obtain the desired neighborhood $N$.  Note that the Reeb orbit corresponding to the critical point $h$ of $f$ will have action equal to $f(h)$, and thus will be on the order of $e^{1-\rho}\approx 1<K$.
\end{proof}

When a contact form $\alpha$ is understood, we will denote the action of a Reeb orbit $\gamma$ by $\mathcal{A}_\alpha(\gamma)$.  Recall that our mixed torus $T$ admits a pair of bypass (half-)disks $D_\pm$.  We will now use Lemma~\ref{lemma:convex-torus-nbhd}, along with more of the analysis carried out in \cite[Section 4]{colin2011sutures}, to construct a contact form on a neighborhood of $T$ which includes $D_\pm$.

\begin{lemma} \label{lemma:bypasschange}
Let $T \subset (M,\xi)$ be a mixed torus with dividing set $\Gamma$.  There exists a neighborhood $N(T)\subset M$ of $T$, along with a contact form $\alpha$ for $\xi$ on $N(T)$, such that:
\begin{enumerate}
	\item $N(T)$ is diffeomorphic to $T^2\times[-1,1]$, with $T$ identified with $T^2\times\{0\}$;
	\item each of the three tori $T^2\times\{-1,0,1\}$ has dividing set consisting of two elliptic Reeb orbits of Conley-Zehnder index $1$, with those of $T^2\times\{0\}$ called $e_1,e_2$, those of $T^2\times\{\pm 1\}$ called $e^{\pm}_4,e^{\pm}_5$;
	\item $N(T)$ is decomposed into four manifold-with-corners regions $N^\pm_1,N^\pm_2$ as labeled in Figure~\ref{fig:bypass}, such that:
	\begin{itemize}
		\item these regions have pairwise disjoint interiors;
		\item $N_1^+\cap N_1^- =T$;
		\item $\overline{\Sigma}^\pm:=N_1^\pm\cap N_2^\pm$ intersects $T^2\times\{\pm 1\}$ along $e^\pm_4$ and intersects $T$ along $e_1$;
		\item $\partial N_2^\pm \setminus \overline{\Sigma}^\pm = (T^2\times\{\pm 1\})\setminus \{e_4^\pm\}$;
		\item $\Sigma^{\pm}:=\overline{\Sigma}^\pm\setminus\{e_1,e^\pm_4\}$ is a noncompact convex surface with dividing set $\{e^\pm_3\}$, where $e^\pm_3$ is an additional elliptic Reeb orbit of Conley-Zehnder index 1;
	\end{itemize}
	\item the Reeb vector field $R_\alpha$ is positively transverse to $R_+$ and negatively transverse to $R_-$ for each of the convex surfaces $T^2\times\{-1,0,1\}$, $\Sigma^{\pm}$;
	\item there exist hyperbolic orbits $h^+_2,h^-_2,h^+_5$, and $h^-_5$ in $N^+_1,N^-_1,N^+_2$ and $N^-_2$, respectively; they have Conley-Zehnder index $0$ with respect to $T$, and are each parallel to the elliptic orbit with corresponding decorations;
	\item the actions of $e_1,e_2,h_2^\pm$, and $h_5^\pm$ satisfy $\mathcal{A}_{\alpha}(e_1) = \mathcal{A}_\alpha(e_2) > \mathcal{A}_\alpha(h^{\pm}_2)=\mathcal{A}_{\alpha}(h^{\pm}_5)$ and for any Reeb orbit $\gamma$ of $N(T)$ other than these six, $\mathcal{A}_\alpha(\gamma)>\mathcal{A}_\alpha(e_1) + \mathcal{A}_\alpha(h_2^\pm)$.\label{cond:reeb-actions}
\end{enumerate}
\end{lemma}

A schematic picture of the Reeb orbits in $N(T) = N(D_- \cup T \cup D_+)$ is given in Figure~\ref{fig:bypass}.

\begin{figure}
	\centering
	\input{bypass-nbhd.tex}
	\caption{A schematic for a standard neighborhood of a mixed torus $T^2\times \{0\}$.  The surfaces identified in Lemma~\ref{lemma:bypasschange} are labeled on the left, while their dividing sets are labeled (left-to-right) on the right.  A concave-to-convex modification is carried out in the blue keyhole-shaped regions, producing the blue hyperbolic Reeb orbits.}
	\label{fig:bypass}
\end{figure}

\begin{proof}
Because $T$ is a mixed torus, it admits bypass disks $D_\pm$ on opposite sides of $T$ within $(M,\xi)$.  Our neighborhood $N(T)$ is then a neighborhood of $D_-\cup T\cup D_+$, with $T^2\times[0,1]$ a neighborhood of $T\cup D_+$ and $T^2\times[-1,0]$ a neighborhood of $T\cup D_-$.  We will construct the desired contact form $\alpha$ by decomposing each of these two regions as a smoothly canceling 1-/2-handle pair.\\

First, we apply Lemma~\ref{lemma:convex-torus-nbhd} to construct $\alpha$ in neighborhoods of the three tori $T^2\times\{-1,0,1\}$.  In these three neighborhoods we will find all of the Reeb orbits listed in the statement of the present lemma, with the exceptions of $e^{\pm}_3$.  Moreover, the precise control given in Lemma~\ref{lemma:convex-torus-nbhd} over the actions of the Reeb orbits in these neighborhoods allows us to ensure that condition~\ref{cond:reeb-actions} of the present lemma is satisfied, at least in these three neighborhoods.  Namely, we may choose constants $2 < K_0 \ll K_\pm \ll a$ and use the parameters $a,K_0$ when applying Lemma~\ref{lemma:convex-torus-nbhd} to $T^2\times\{0\}$, while using $a,K_\pm$ when applying Lemma~\ref{lemma:convex-torus-nbhd} to $T^2\times\{\pm 1\}$.  The resulting neighborhoods will have the same boundary behavior, since this is determined by the parameter $a$, but the elliptic orbits constituting the dividing set of $T^2\times\{\pm 1\}$ will have much larger action than will those of $T^2\times\{0\}$.  As explained in the proof of Lemma~\ref{lemma:convex-torus-nbhd}, the hyperbolic orbits which result from the concave-to-convex modification will necessarily have action approximately equal to $1<K_0$.\\

Now $N(T^2\times\{0\})$ has two sutured boundary components, each homeomorphic to $T^2$, and each with two sutures.  One of these sutures --- corresponding to $e_2$ --- is convex with respect to $\alpha$, while the other --- corresponding to $e_1$ --- is concave.  We have assumed that the bypass disks $D_\pm$ have the endpoints of their attaching arcs on $e_2$, and thus in the contact handle model for a bypass neighborhood, the contact 1-handle is attached along a pair of points in the suture corresponding to $e_2$.  Note that this symmetry --- the fact that our bypass disks have their endpoints on a common component of the dividing set of $T^2\times\{0\}$ --- is precisely the condition required for a convex torus to be mixed.  Without this condition, the surfaces $\Sigma^{\pm}$ we construct in this lemma would lack a crucial symmetry needed for our holomorphic curve argument later.\\

As depicted in Figure~\ref{fig:1-handle-as-gluing}, contact 1-handle attachment may be carried out as a gluing of a (convex) sutured contact manifold\footnote{An anonymous referee pointed out that attaching a contact 1-handle to a convex suture is more easily done as an \emph{interval-fibered extension}, as defined in \cite[Example 2.10]{colin2011sutures}.  In particular, the relevant Liouville cobordism is a pair-of-pants constructed by attaching a Weinstein 1-handle to the symplectization of the suture.  This simpler attachment works just as well for our purposes, since it introduces no new Reeb orbits, and we thank the referee for this observation.} by identifying gluing data $(P_+,P_-,\phi)$, where $P_+$ and $P_-$ are disks in the positive and negative regions, respectively, of a single component of $\partial N(T^2\times\{0\})$, each adjacent to the suture, and $\phi$ is a rotation by $\pi$.  Following the construction of \cite[Section 4.3]{colin2011sutures}, we may carry out this gluing in such a manner that any Reeb orbit which intersects the surface $P$ resulting from gluing $P_+$ to $P_-$ has arbitrarily large action.  Namely, the Reeb vector field of $\alpha$ points out of $N(T^2\times\{0\})$ along $P_+$ and into $N(T^2\times\{0\})$ along $P_-$, and the construction of \cite[Section 4.3]{colin2011sutures} tells us how to ``stretch in the Reeb direction" before gluing $P_+$ to $P_-$.\\

\begin{figure}
	\centering
	\begin{subfigure}[t]{0.5\textwidth}
		\def\svgwidth{\columnwidth}
	        \centering
\begingroup%
  \makeatletter%
  \providecommand\color[2][]{%
    \errmessage{(Inkscape) Color is used for the text in Inkscape, but the package 'color.sty' is not loaded}%
    \renewcommand\color[2][]{}%
  }%
  \providecommand\transparent[1]{%
    \errmessage{(Inkscape) Transparency is used (non-zero) for the text in Inkscape, but the package 'transparent.sty' is not loaded}%
    \renewcommand\transparent[1]{}%
  }%
  \providecommand\rotatebox[2]{#2}%
  \newcommand*\fsize{\dimexpr\f@size pt\relax}%
  \newcommand*\lineheight[1]{\fontsize{\fsize}{#1\fsize}\selectfont}%
  \ifx\svgwidth\undefined%
    \setlength{\unitlength}{402.66674612bp}%
    \ifx\svgscale\undefined%
      \relax%
    \else%
      \setlength{\unitlength}{\unitlength * \real{\svgscale}}%
    \fi%
  \else%
    \setlength{\unitlength}{\svgwidth}%
  \fi%
  \global\let\svgwidth\undefined%
  \global\let\svgscale\undefined%
  \makeatother%
  \begin{picture}(1,0.51465963)%
    \lineheight{1}%
    \setlength\tabcolsep{0pt}%
    \put(0,0){\includegraphics[width=\unitlength,page=1]{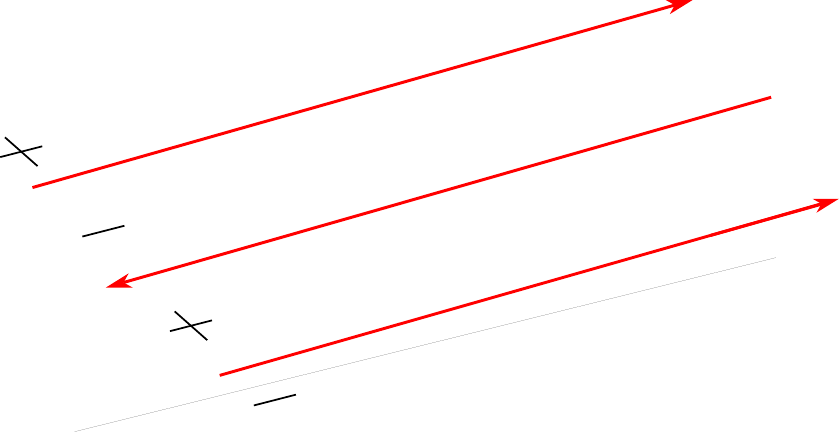}}%
    \put(0.76849832,0.52620567){\color[rgb]{0,0,0}\makebox(0,0)[lt]{\lineheight{1.25}\smash{\begin{tabular}[t]{l}${\color{red}e_2}$\end{tabular}}}}%
    \put(0.87280287,0.40700039){\color[rgb]{0,0,0}\makebox(0,0)[lt]{\lineheight{1.25}\smash{\begin{tabular}[t]{l}${\color{red}e_1}$\end{tabular}}}}%
    \put(0.94358005,0.2877951){\color[rgb]{0,0,0}\makebox(0,0)[lt]{\lineheight{1.25}\smash{\begin{tabular}[t]{l}${\color{red}e_2}$\end{tabular}}}}%
    \put(0,0){\includegraphics[width=\unitlength,page=2]{bypass-data.pdf}}%
  \end{picture}%
\endgroup%

	        \caption{Attaching data for a contact 1-handle.}
    	\end{subfigure}
    ~ 
    \begin{subfigure}[t]{0.5\textwidth}
  \def\svgwidth{\columnwidth}
        \centering
\begingroup%
  \makeatletter%
  \providecommand\color[2][]{%
    \errmessage{(Inkscape) Color is used for the text in Inkscape, but the package 'color.sty' is not loaded}%
    \renewcommand\color[2][]{}%
  }%
  \providecommand\transparent[1]{%
    \errmessage{(Inkscape) Transparency is used (non-zero) for the text in Inkscape, but the package 'transparent.sty' is not loaded}%
    \renewcommand\transparent[1]{}%
  }%
  \providecommand\rotatebox[2]{#2}%
  \newcommand*\fsize{\dimexpr\f@size pt\relax}%
  \newcommand*\lineheight[1]{\fontsize{\fsize}{#1\fsize}\selectfont}%
  \ifx\svgwidth\undefined%
    \setlength{\unitlength}{402.65666811bp}%
    \ifx\svgscale\undefined%
      \relax%
    \else%
      \setlength{\unitlength}{\unitlength * \real{\svgscale}}%
    \fi%
  \else%
    \setlength{\unitlength}{\svgwidth}%
  \fi%
  \global\let\svgwidth\undefined%
  \global\let\svgscale\undefined%
  \makeatother%
  \begin{picture}(1,0.54687882)%
    \lineheight{1}%
    \setlength\tabcolsep{0pt}%
    \put(0,0){\includegraphics[width=\unitlength,page=1]{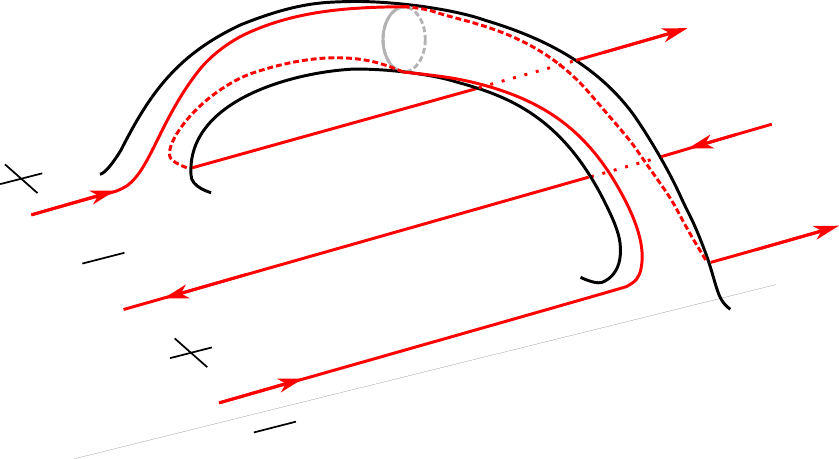}}%
    \put(0.76851754,0.52994416){\color[rgb]{0,0,0}\makebox(0,0)[lt]{\lineheight{1.25}\smash{\begin{tabular}[t]{l}${\color{red}e_3^+}$\end{tabular}}}}%
    \put(0.87282471,0.41446113){\color[rgb]{0,0,0}\makebox(0,0)[lt]{\lineheight{1.25}\smash{\begin{tabular}[t]{l}${\color{red}e_1}$\end{tabular}}}}%
    \put(0.93987845,0.29525297){\color[rgb]{0,0,0}\makebox(0,0)[lt]{\lineheight{1.25}\smash{\begin{tabular}[t]{l}${\color{red}e_4^+}$\end{tabular}}}}%
    \put(0.46116974,0.49081221){\color[rgb]{0,0,0}\makebox(0,0)[lt]{\lineheight{1.25}\smash{\begin{tabular}[t]{l}${\color{gray}P}$\end{tabular}}}}%
  \end{picture}%
\endgroup%

        \caption{The cocore of the 1-handle provides a surface along which we perform a sutured manifold decomposition.}
        \label{fig:1-handle-attached}
    \end{subfigure}
    
    \begin{subfigure}[t]{0.5\textwidth}
  \def\svgwidth{\columnwidth}
        \centering
\begingroup%
  \makeatletter%
  \providecommand\color[2][]{%
    \errmessage{(Inkscape) Color is used for the text in Inkscape, but the package 'color.sty' is not loaded}%
    \renewcommand\color[2][]{}%
  }%
  \providecommand\transparent[1]{%
    \errmessage{(Inkscape) Transparency is used (non-zero) for the text in Inkscape, but the package 'transparent.sty' is not loaded}%
    \renewcommand\transparent[1]{}%
  }%
  \providecommand\rotatebox[2]{#2}%
  \newcommand*\fsize{\dimexpr\f@size pt\relax}%
  \newcommand*\lineheight[1]{\fontsize{\fsize}{#1\fsize}\selectfont}%
  \ifx\svgwidth\undefined%
    \setlength{\unitlength}{402.65666811bp}%
    \ifx\svgscale\undefined%
      \relax%
    \else%
      \setlength{\unitlength}{\unitlength * \real{\svgscale}}%
    \fi%
  \else%
    \setlength{\unitlength}{\svgwidth}%
  \fi%
  \global\let\svgwidth\undefined%
  \global\let\svgscale\undefined%
  \makeatother%
  \begin{picture}(1,0.54405518)%
    \lineheight{1}%
    \setlength\tabcolsep{0pt}%
    \put(0,0){\includegraphics[width=\unitlength,page=1]{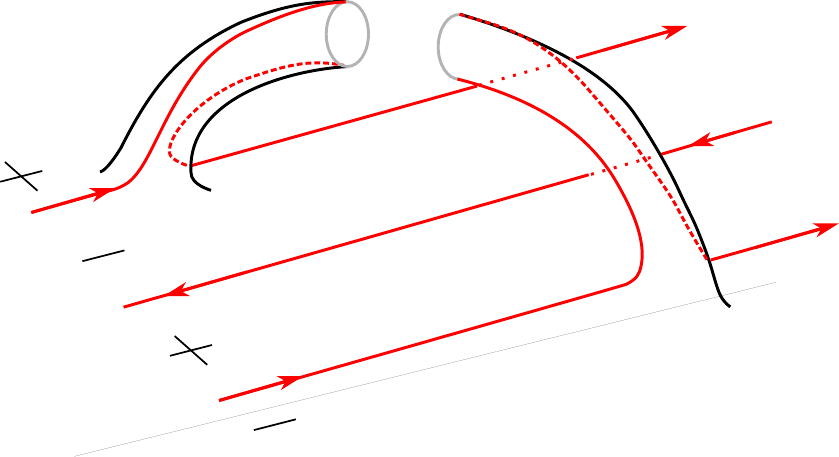}}%
    \put(0.76851755,0.53739462){\color[rgb]{0,0,0}\makebox(0,0)[lt]{\lineheight{1.25}\smash{\begin{tabular}[t]{l}${\color{red}e_2}$\end{tabular}}}}%
    \put(0.87282472,0.41446098){\color[rgb]{0,0,0}\makebox(0,0)[lt]{\lineheight{1.25}\smash{\begin{tabular}[t]{l}${\color{red}e_1}$\end{tabular}}}}%
    \put(0.94360367,0.29897793){\color[rgb]{0,0,0}\makebox(0,0)[lt]{\lineheight{1.25}\smash{\begin{tabular}[t]{l}${\color{red}e_2}$\end{tabular}}}}%
    \put(0.3895548,0.49717496){\color[rgb]{0,0,0}\makebox(0,0)[lt]{\lineheight{1.25}\smash{\begin{tabular}[t]{l}${\color{gray}P_+}$\end{tabular}}}}%
    \put(0,0){\includegraphics[width=\unitlength,page=2]{contact-1-handle-cut.pdf}}%
    \put(0.52200286,0.47623016){\color[rgb]{0,0,0}\makebox(0,0)[lt]{\lineheight{1.25}\smash{\begin{tabular}[t]{l}${\color{gray}P_-}$\end{tabular}}}}%
  \end{picture}%
\endgroup%

        \caption{Performing the decomposition produces a disk in each of the positive and negative regions.}
    \end{subfigure}
    ~ 
    \begin{subfigure}[t]{0.5\textwidth}
  \def\svgwidth{\columnwidth}
        \centering
\begingroup%
  \makeatletter%
  \providecommand\color[2][]{%
    \errmessage{(Inkscape) Color is used for the text in Inkscape, but the package 'color.sty' is not loaded}%
    \renewcommand\color[2][]{}%
  }%
  \providecommand\transparent[1]{%
    \errmessage{(Inkscape) Transparency is used (non-zero) for the text in Inkscape, but the package 'transparent.sty' is not loaded}%
    \renewcommand\transparent[1]{}%
  }%
  \providecommand\rotatebox[2]{#2}%
  \newcommand*\fsize{\dimexpr\f@size pt\relax}%
  \newcommand*\lineheight[1]{\fontsize{\fsize}{#1\fsize}\selectfont}%
  \ifx\svgwidth\undefined%
    \setlength{\unitlength}{402.66674612bp}%
    \ifx\svgscale\undefined%
      \relax%
    \else%
      \setlength{\unitlength}{\unitlength * \real{\svgscale}}%
    \fi%
  \else%
    \setlength{\unitlength}{\svgwidth}%
  \fi%
  \global\let\svgwidth\undefined%
  \global\let\svgscale\undefined%
  \makeatother%
  \begin{picture}(1,0.51465963)%
    \lineheight{1}%
    \setlength\tabcolsep{0pt}%
    \put(0,0){\includegraphics[width=\unitlength,page=1]{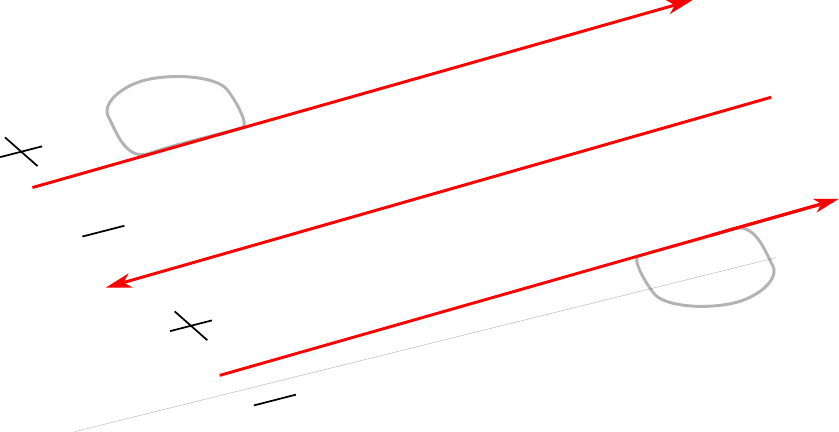}}%
    \put(0.76849831,0.53365611){\color[rgb]{0,0,0}\makebox(0,0)[lt]{\lineheight{1.25}\smash{\begin{tabular}[t]{l}${\color{red}e_2}$\end{tabular}}}}%
    \put(0.87280291,0.41445077){\color[rgb]{0,0,0}\makebox(0,0)[lt]{\lineheight{1.25}\smash{\begin{tabular}[t]{l}${\color{red}e_1}$\end{tabular}}}}%
    \put(0.94358009,0.29524538){\color[rgb]{0,0,0}\makebox(0,0)[lt]{\lineheight{1.25}\smash{\begin{tabular}[t]{l}${\color{red}e_2}$\end{tabular}}}}%
    \put(0.18215358,0.38049394){\color[rgb]{0.6,0.6,0.6}\makebox(0,0)[lt]{\lineheight{1.25}\smash{\begin{tabular}[t]{l}${\color{gray} P_+}$\end{tabular}}}}%
    \put(0.82660699,0.19051046){\color[rgb]{0.6,0.6,0.6}\makebox(0,0)[lt]{\lineheight{1.25}\smash{\begin{tabular}[t]{l}${\color{gray} P_-}$\end{tabular}}}}%
  \end{picture}%
\endgroup%

        \caption{The decomposed surface is identified with the original surface.}
    \end{subfigure}
    \caption{Contact 1-handle attachment may be carried out via the gluing of (convex) sutured contact manifolds, as described in \cite[Section 4.3]{colin2011sutures}.  Each of the surfaces above is assumed to be sutured along its dividing set; while the components of the dividing set are labeled according to parallel Reeb orbits, they are \emph{not} taken to be Reeb themselves.}
    \label{fig:1-handle-as-gluing}
\end{figure}

The gluing just described is carried out on each component of $\partial N(T^2\times\{0\})$, which we think of as attaching a contact 1-handle on either side of $N(T^2\times\{0\})$.  The result is a sutured contact manifold $Y_0$ with two boundary components, each homeomorphic to a surface of genus two, and each with three sutures.  One of the sutures, corresponding to $e_1$, is concave with respect to $\alpha$; the other two sutures, corresponding to $e^{\pm}_3$ and $e^{\pm}_4$, are convex.  We may similarly attach a contact 1-handle to each of $N(T^2\times\{-1\})$ and $N(T^2\times\{1\})$, producing contact manifolds $Y_\pm$ whose boundaries each have a component with one concave suture and two convex sutures.  As depicted in Figure~\ref{fig:orbits-near-t}, $Y_0$, $Y_+$, and $Y_-$ are glued together along their positive and negative boundaries, as well as along the sutures corresponding to $e_1$ and $e^{\pm}_4$.  Note that we are thinking of $Y_0$, $Y_+$, and $Y_-$ as abstractly constructed sutured contact manifolds and then identifying them as subsets of $N(T)$.  In order to realize the contact 1-handles attached to $N(T^2\times\{-1\})$ and $N(T^2\times\{1\})$ as ``upside-down 2-handles", we must identify the positive and negative regions as described; however, the elliptic orbits $e^\pm_4,e^\pm_5$ are not parallel to $e_1,e_2$ under this identification.  Following this gluing, we have nearly completed our construction of $\alpha$ on $N(T)$.  The four remaining sutures correspond to the elliptic Reeb orbits $e^{\pm}_3$ (one on each boundary component of $Y_0$, $Y_\pm$ which is homeomorphic to a surface of genus 2), and thus cannot be directly glued to one another.  However, by repeating the first step of the proof of Lemma~\ref{lemma:convex-torus-nbhd} we may define $\alpha$ on a solid torus with $e^{\pm}_3$ at its core in such a way that this solid torus glues into the boundary component of $Y_-\cup Y_0\cup Y_+$ which corresponds to $e^{\pm}_3$, and such that the action of $e^{\pm}_3$ is as large as that of $e_4^\pm$ or $e_5^\pm$ --- that is, sufficiently large as to satisfy condition~\ref{cond:reeb-actions}.\\

Finally, we describe the surfaces $\Sigma^{\pm}$ and their closures $\overline{\Sigma}^\pm$.  In our construction, each of the Reeb orbits $e_1,e^{\pm}_3$, and $e^{\pm}_4$ has a neighborhood $S^1\times D^2$, with coordinates $(\theta,\rho,\phi)$, such that the Reeb vector field has a positive $\partial_\phi$-component at all points where $\rho\neq 0$.  It follows that $R_\alpha$ is transverse to the interior of any sheet $\{\phi=\phi_0,\rho>0\}$, where $\phi_0$ is constant, and that any such sheet is a convex surface with end $\{\rho=0\}$.  Now consider the surface which results from gluing $Y_0$ to $Y_\pm$, which has three components.  These components meet the neighborhoods of $e_1,e^{\pm}_3$, and $e^{\pm}_4$ just described, and thus may be smoothly extended through these neighborhoods to produce a convex surface $\Sigma^{\pm}$ with dividing set $\{e^{\pm}_3\}$ and cylindrical ends $e_1$ and $e^\pm_4$, as desired.  The closure of $\Sigma^{\pm}$ is $\overline{\Sigma}^{\pm}$ and, along with $T^2\times\{-1,0,1\}$, these decompose $N(T)$ into the desired regions.  See Figure~\ref{fig:bypass}.
\end{proof}

\begin{remark}
As discussed in its proof, Lemma~\ref{lemma:bypasschange} relies crucially on the difference in sign between the basic slices on the two sides of a mixed torus $T$.  Given any bypass half-disk attached to $T$, one could realize $T\times[0,1]$ or $T\times[-1,0]$ as a smoothly cancelling pair of contact handles; but the symmetry between the surfaces $\Sigma^+$ and $\Sigma^-$ in Lemma~\ref{lemma:bypasschange} requires that the bypass half-disks on either side of $T$ have their endpoints on the same component of $\Gamma_T$.  If $T$ sits between basic slices of the same sign, then the endpoints of the corresponding bypass half-disks lie on opposite components of $\Gamma_T$.  In fact, any convex torus $T$ admits on each side a \emph{trivial bypass} (c.f. \cite[Section 1.4]{honda2002gluing}) from which one could construct a cancelling pair of contact handles; in this case, the bypass half-disk has endpoints on distinct components of $\Gamma_T$ and we again fail to obtain the surfaces $\Sigma^{\pm}$.
\end{remark}

Having identified this neighborhood of a mixed torus $T$, we want now to construct a pair of $J$-holomorphic curves in $\mathbb{R}\times M$ which are positively asymptotic to the dividing set $e_1\cup e_2$, and which project to the positive and negative regions $R_\pm$ of $T$.  We will construct these lifts in parts: we first lift $R_\pm$ minus a collar neighborhood, producing $J$-holomorphic curves with bounded coordinates in the symplectization direction, and then construct $J$-holomorphic half-cylinders positively asymptotic to $e_1\cup e_2$ by which we may complete these lifts.\\

The following lemma will allow us to lift the positive and negative regions.

\begin{lemma} \label{lemma:lift-regions}
Let  $(B,\beta = -df \circ j)$ be a 2-dimensional Weinstein domain, where $f: B \to \RR$ is a Morse function such that $\partial B$ is a level set of $f$ and $j$ is an adapted almost complex structure on $(B,\beta)$.  Let $\alpha = dt + \beta$ be a contact form on $[-\epsilon,\epsilon] \times B$, where $t \in [-\epsilon,\epsilon]$. Then there is an adapted almost complex structure on $\RR \times [-\epsilon,\epsilon] \times B$ such that we can lift $B$ to a holomorphic curve by the map $u(\mathbf{x}) = (f(\mathbf{x}),0,\mathbf{x})$.
\end{lemma}

\begin{proof}
The Liouville vector field $X$ for $\beta$ directs the characteristic foliation on $B = \{0\} \times B$ and satisfies $\iota_Xd\beta = \beta$, and thus $\beta(X) = 0$. The Reeb vector field on $[-\epsilon,\epsilon] \times B$ is $\partial_t$. It follows that the contact structure $\ker(\alpha)$ is spanned by $X$ and $jX + g\,\partial_t$, for some function $g: B \to \RR$.  Since $0 = \alpha(jX + g\,\partial_t) = g + \beta(jX) = g + df(X)$ we have that $g = -df(X)$.\\

We want the almost complex structure $J$ to lift $j$ so we specify
\[
J(X) = jX - df(X)\partial_t
\qquad\text{and}\qquad
J(\partial_s) = \partial_t,
\]
and extend this definition by linearity and the property that $J^2=-I$.  In order to verify that $u(\mathbf{x}) = (f(\mathbf{x}),0,\mathbf{x})$ is $J$-holomorphic we check that
\[
(J\circ du)(X) = (du\circ j)(X).
\]
Indeed,
\[
(J\circ du)(X) = J(df(X)\,\partial_s + X) = jX,
\]
and
\[
(du\circ j)(X) = df(jX)\,\partial_s + jX = -\beta(X)\,\partial_s + jX = jX.
\]
This shows that $u$ is $J$-holomorphic.
\end{proof}

We now construct the desired holomorphic curves in the symplectization $\mathbb{R}\times M$.

\begin{lemma}\label{lemma:lifts-with-asymptotic-ends}
There are embedded holomorphic curves $u_{\pm} : \RR \times S^1 \to [0,\infty) \times M$ such that:
\begin{itemize}
	\item $u_{\pm}$ are Fredholm regular and index 2.
	\item $u_{\pm}$ are positively asymptotic to $e_1$ and $e_2$.
	\item The image of $u_{\pm}$ under the projection $\pi	: [0,\infty) \times M \to M$ is $R_\pm(T)$.
\end{itemize} 
\end{lemma}

\begin{proof}
Let us consider the neighborhood of $T$ constructed in Lemma~\ref{lemma:convex-torus-nbhd} and extended by Lemma~\ref{lemma:bypasschange} to $N(T)$.  As in Lemma~\ref{lemma:convex-torus-nbhd}, we consider this neighborhood to be split between the two-component neighborhood $N(\Gamma_T)$ of the dividing set and a Reeb-thickening $N':=(T^2\times[-\epsilon,\epsilon])-N(\Gamma_{T})$ of $R'_\pm\times\{0\}$, the positive and negative regions of $T$, minus collar neighborhoods.  However, we choose $N(\Gamma_T)$ to be strictly smaller than that neighborhood used in the proof of Lemma~\ref{lemma:convex-torus-nbhd}, so that the neighborhood used here does not meet the region where the concave-to-convex modification was carried out in Lemma~\ref{lemma:convex-torus-nbhd}.\\

Because $R_+'\times\{0\}$ and $R_-'\times\{0\}$ are Weinstein domains, Lemma~\ref{lemma:lift-regions} allows us to lift these regions to holomorphic curves in the symplectization which have constant symplectization coordinate at the boundary.  Let us denote this constant coordinate by $a_0$.  We will now construct holomorphic half-cylinders in the symplectization of $N(\Gamma_{T})$ which are asymptotic to the Reeb orbits $e_1,e_2$, and which we may glue to the lifts coming from Lemma~\ref{lemma:lift-regions}.  This argument closely follows~\cite{wendl2010open}. \\

Let us consider a single component of $N(\Gamma_{T})$, identified with $S^1\times D^2$.  Namely, the intersection of $T$ with this component is given by $\{\phi=\phi_0\}\cup\{\phi=\phi_0+1/2\}$, where we use the coordinates $(\theta,\rho,\phi)$ on $S^1\times D^2$.  Compared to the coordinates used in the proof of Lemma~\ref{lemma:convex-torus-nbhd}, the $\rho$-direction has been scaled, to ensure that our $N(\Gamma_T)$ does not meet the region on which the concave-to-convex sutured modification was carried out.  In these coordinates, $\alpha$ is expressed as $\alpha=f(\rho)\,d\theta + g(\rho)\,d\phi$, with $f$ and $g$ as in the proof of Lemma~\ref{lemma:convex-torus-nbhd}.  In particular, $f$ is independent of $\phi$ in the neighborhood $N(\Gamma_T)$ used here.  Now we notice that the vectors
\[
v_1 := \partial_\rho
\quad\text{and}\quad
v_2 := -g(\rho)\,\partial_\theta + f(\rho)\,\partial_\phi
\]
span the contact structure $\ker\alpha$, for $\rho>0$.  Recall that $R_\alpha = \frac{g'}{D}\,\partial_\theta - \frac{f'}{D}\,\partial_\phi$, where $D(\rho)=f(\rho)\,g'(\rho)-f'(\rho)\,g(\rho)$.  We define an almost complex structure $J$ on $\mathbb{R}_a\times(S^1\times D^2)$ by setting
\[
J\,v_1 = \beta(\rho)\,v_2
\quad\text{and}\quad
J\,\partial_a = R_\alpha,
\]
for $\rho>0$, where $\beta(\rho)>0$ is an appropriately chosen smooth function.  In particular, we choose $\beta$ so that this definition of $J$ extends across $\{\rho=0\}$.  (For instance, this can be accomplished by taking $\beta=1/(\rho^2 f)$ near $\{\rho=0\}$.)  We assume that $\beta(\rho)=1$ outside a neighborhood of $\{\rho=0\}$.\\

We may now express the Cauchy-Riemann equations with respect to our chosen coordinates.  In particular, let us write a map $u\colon\mathbb{R}_s\times S^1_t\to[0,\infty)_a\times (S^1\times D^2)$ as
\[
u(s,t) = (a(s,t),\theta(s,t),\rho(s,t),\phi(s,t)).
\]
Then $u$ is $J$-holomorphic if and only if the following equations are satisfied:
\begin{align*}
a_s &= f\theta_t + g\phi_t \quad\quad\quad\quad &
\rho_s &= \frac{1}{\beta D} (f'\theta_t + g'\phi_t) \\
a_t &= -f\theta_s - g\phi_s &
\rho_t &= -\frac{1}{\beta D} (f'\theta_s + g'\phi_s).
\end{align*}
Indeed, using the fact that
\[
J\,\partial_\theta = -f\,\partial_a - \frac{f'}{\beta D}\,\partial_\rho
\quad\text{and}\quad
J\,\partial_\phi = -g\,\partial_a - \frac{g'}{\beta D}\,\partial_\rho,
\]
one may compute
\[
(J\circ du)(\partial_s) = -(f\theta_s+g\phi_s)\partial_a + \frac{1}{D}(g'a_s - g\beta D \rho_s)\partial_\theta - \frac{1}{\beta D}(f' \theta_s + g' \phi_s)\partial_\rho + \frac{1}{D}(f \beta D \rho_s - f' a_s)\partial_\phi
\]
and
\[
(J\circ du)(\partial_t) = -(f\theta_t+g\phi_t)\partial_a + \frac{1}{D}(g'a_t - g\beta D \rho_t)\partial_\theta - \frac{1}{\beta D}(f' \theta_t + g' \phi_t)\partial_\rho + \frac{1}{D}(f \beta D \rho_t - f' a_t)\partial_\phi.
\]
Comparing these vectors to $(du\circ j)(\partial_s)$ and $(du\circ j)(\partial_t)$, respectively, yields a system of eight differential equations.  Indeed, the four equations seen above result from inspecting the $\partial_a$- and $\partial_r$-components of the vectors, while the four equations corresponding to the $\partial_\theta$- and $\partial_\phi$-components comprise an equivalent system.\\

Note that $f,g,D,$ and $\beta$ are now functions of $\rho(s,t)$.  Consider functions $a(s)$ and $\rho(s)$ satisfying the ordinary differential equations
\[
a'(s) = f(\rho),
\quad
\rho'(s) = \dfrac{f'(\rho)}{\beta(\rho)\,D(\rho)}.
\]
We may define $J$-holomorphic curves $u_{\pm\phi_0}\colon\mathbb{R}\times S^1\to [0,\infty)\times(S^1\times D^2)$ via
\[
u_{\pm\phi_0}(s,t) := (a(s),t,\rho(s),\pm\phi_0),
\]
where $\phi_0\in S^1$ is the constant identified above.  In particular, we have holomorphic half-cylinders $u_{\pm\phi_0}$, and the condition $f'(\rho)<0$ ensures that $\rho'(s)<0$ whenever $\rho>0$.  Consequently, $u_{\pm\phi_0}$ is positively asymptotic to $\{\rho=0\}$ as $s$ grows without bound.  Altogether, with $a_0\in[0,\infty)$ the constant identified above, the unique curve $u_{\pm\phi_0}$ with $\rho(0)=1$ and $a(0)=a_0$ is a holomorphic half cylinder which is positively asymptotic to either $e_1$ or $e_2$ as $s\to\infty$ and has $a(s,t)$ and $\phi(s,t)$ contstant near $\{\rho=1\}$.  (In fact $\phi(s,t)$ is constant throughout its domain.)  By construction, the lift of $R'_{\pm}$ taken at the beginning of this argument has $\phi=\pm\phi_0$ along $\{\rho=1\}$, and thus the holomorphic half-cylinder $u_{\pm\phi_0}$ may be smoothly attached to $R'_{\pm}$ to produce the desired curve $u_{\pm}$.\\

We now compute the index of $u_\pm\colon\mathbb{R}\times S^1 \to [0,\infty)\times M$.  As explained in, say, \cite[Equation 3.2]{wendl2010open} and \cite{wendl2008automatic}, this index is given by
\[
\mathrm{ind}(u_\pm) = \chi(\mathbb{R}\times S^1) + 2\,c_1(N_{u_\pm}) + \mu_{\mathrm{CZ}}(e_1) + \mu_{\mathrm{CZ}}(e_2) = 2\,c_1(N_{u_\pm}) + 2,
\]
where $c_1(N_{u_\pm})$ is the relative first Chern number of the normal bundle of $u_\pm$.  We will now show that $N_{u_\pm}$ admits a nonvanishing section which is constant in the asymptotic trivialization, and thus $c_1(N_{u_\pm})=0$.  Indeed, away from its ends, the image of $u_{\pm}$ is transverse to the Reeb vector field $R_\alpha$.  Near the ends of $u_{\pm}$ $R_\alpha$ is parallel to $\partial_\phi$, and our construction of the ends of $u_{\pm}$ ensure that $\partial_\phi$ is transverse to the image of $u_{\pm}$.  So we may construct a smooth vector field equal to $R_\alpha$ away from the ends of $u_{\pm}$ and equal to $\partial_\phi$ in these asymptotic ends; being everywhere transverse to the image of $u_{\pm}$, this vector field is our desired nonvanishing section, and we conclude that $\mathrm{ind}(u_\pm)=2$.\\

Finally, the Fredholm regularity of $u_\pm$ follows from \cite[Proposition 7]{wendl2010open}.
\end{proof}

We would now like to use the curves $u_{\pm}$ constructed in Lemma~\ref{lemma:lifts-with-asymptotic-ends} as endpoints of a family of holomorphic cylinders which sweep out a solid torus in $(\widehat{W},\widehat{\omega})$.  Towards this goal, we will study the moduli space $\mathcal{M}(e_1,e_2)$ of index 2 curves $u: \RR \times S^1 \to \RR \times M$ which are positively asymptotic to $e_1$ and $e_2$ and represent the same homology class as $u_+$ or $u_-$, and the mixed torus condition will preclude undesirable degenerations in this moduli space.\\

In the following lemma, we construct holomorphic curves lifting the positive and negative regions of the surfaces $\Sigma^+$ and $\Sigma^-$ constructed in Lemma~\ref{lemma:bypasschange}.  These are depicted in Figure~\ref{fig:bypass} and should be viewed as ``walls" in the moduli space of curves which we study.

\begin{lemma} \label{lemma:walls}
Let $P$ be a thrice-punctured sphere.  There are embedded holomorphic curves
\[
u^{\pm}_{1,3^+,4^+},u^{\pm}_{1,3^-,4^-}: P \to [0,\infty) \times T^2 \times [-1,1]
\]
such that:
\begin{itemize}
	\item both are Fredholm regular and have index 2;
	\item $u^{\pm}_{1,3^+,4^+}$ is positively asymptotic to $\{e_1,e^+_3,e^+_4\}$ and $u^{\pm}_{1,3^-,4^-}$ is positively asymptotic to $\{e_1,e^-_3,e^-_4\}$;
	\item under the projection $\pi\colon [0,\infty)\times T^2\times[-1,1]\to T^2\times[-1,1]$, the images of $u^{\pm}_{1,3^+,4^+}$ and $u^{\pm}_{1,3^-,4^-}$ are contained in $R_{\pm}(\Sigma^+)$ and $R_{\pm}(\Sigma^-)$, respectively.
\end{itemize}
\end{lemma}

\begin{proof}
We repeat the procedure of Lemma~\ref{lemma:lifts-with-asymptotic-ends}.  Let us denote by $N(\Gamma_{\Sigma^{\pm}})$ a (disconnected) neighborhood of the three elliptic orbits $e_1,e_3^{\pm},e_4^{\pm}$.  Of course this notation is abusive, since neither $e_1$ nor $e_4^{\pm}$ is contained in $\Sigma^\pm$; nonetheless, they are contained in the closure of $\Sigma^\pm$ and serve as its ``dividing set."  We may then let $R'_{\pm}(\Sigma^{\pm})$ be the closure of $R_{\pm}(\Sigma^{\pm}) \setminus N(\Gamma_{\Sigma^\pm})$ and construct a standard neighborhood of $\Sigma^{\pm}$ by taking the union of $N(\Gamma_{\Sigma^\pm})$ with Reeb thickenings of the surfaces $R'_{\pm}(\Sigma^\pm)$.\\
  
As in the proof of Lemma~\ref{lemma:lifts-with-asymptotic-ends}, $R'_{\pm}(\Sigma^{\pm})$ is a Weinstein domain and we may use Lemma~\ref{lemma:lift-regions} to lift these regions to holomorphic curves in the symplectization whose symplectization coordinate is constant at the boundary.  We complete these lifts by attaching holomorphic half-cylinders in the symplectization of $N(\Gamma_{\Sigma^{\pm}})$ which lie over sheets of the form $\{\phi=\phi_0,\rho>0\}$, the construction of which follows as in Lemma~\ref{lemma:lifts-with-asymptotic-ends}.  In particular, neighborhoods of the elliptic orbits $e^\pm_3,e^\pm_4,e^\pm_5$ were all constructed in the same manner as those for $e_1$ and $e_2$ --- modulo occasional concave-to-convex sutured modifications --- and thus the desired holomorphic half-cylinders exist.\\

Once again, the Fredholm regularity of these curves follows from~\cite[Proposition 7]{wendl2010open}, so it remains to compute their indices.  As in the proof of Lemma~\ref{lemma:lifts-with-asymptotic-ends}, we appeal to~\cite[Equation 3.2]{wendl2010open} to compute
\[
\mathrm{ind}(u^{\pm}_{i,j,k}) = \chi(P) + 2\,c_1(N_{u^{\pm}_{i,j,k}}) + \mu_{\mathrm{CZ}}(e_i) + \mu_{\mathrm{CZ}}(e_j) + \mu_{\mathrm{CZ}}(e_k) = -1 + 2\,c_1(N_{u_\pm}) + 3,
\]
where $c_1(N_{u^{\pm}_{i,j,k}})$ is the relative first Chern number of the normal bundle of $u^{\pm}_{i,j,k}$.  Just as in Lemma~\ref{lemma:lifts-with-asymptotic-ends}, we may construct a nonvanishing section of this normal bundle which is constant in the asymptotic trivialization, leading us to conclude that this relative Chern number is trivial and that $\mathrm{ind}(u^{\pm}_{i,j,k}) = 2$.
\end{proof}

We now consider the quotient $\mathcal{M}(e_1,e_2)/\RR$ of $\mathcal{M}(e_1,e_2)$ by $\RR$-translation.  We show that the ``walls" constructed in Lemma~\ref{lemma:walls} preclude the compactness of this quotient, and that our understanding of the Reeb orbits in $T^2\times[-1,1]$ allows us to understand its compactification.  One can learn about the analysis of such moduli spaces in, for instance, \cite{wendl2010lectures}.

\begin{lemma} \label{lemma:compactification}
The compactification $\overline{\mathcal{M}(e_1,e_2)/\RR}$ is the disjoint union of two components $\mathcal{N}_\pm$ containing the equivalence classes of $u_\pm$ up to $\RR$ translation. The boundary $\partial\mathcal{N}_\pm$ consists of 
\begin{itemize}
	\item a two-level building $v^+_{1,\pm} \cup v^+_{0,\pm}$, where $v^+_{1,\pm}$ is the top level consisting of a cylinder positively asymptotic to $e_2$ and negatively asymptotic to $h^+_2$ and $v^+_{0,\pm}$ is the bottom level consisting of a cylinder positively asymptotic to $e_1$ and $h^+_2$ and 
	\item another two-level building $v^-_{1,\pm} \cup v^-_{0,\pm}$ with $h^+_2$ replaced by $h^-_2$.
\end{itemize}
\end{lemma}

\begin{remark}
The superscripts in $v^{\pm}_{0,\pm}$ and $v^{\pm}_{1,\pm}$ indicate whether the given curve lies above (in the symplectization direction) $T^2\times[0,1]$ or $T^2\times[-1,0]$, while the sign in the subscript is determined by whether the curve is associated with $R_+(T)$ or $R_-(T)$.
\end{remark}

\begin{proof}
Throughout the proof, we will think of $\mathcal{M}(e_1,e_2)/\mathbb{R}$ as parametrizing curves $\pi_M\circ u\colon\mathbb{R}\times S^1\to M$, where $\pi_M\colon\mathbb{R}\times M\to M$ is the obvious projection.  Note that each of these "curves" is really an equivalence class of curves in $\mathbb{R}\times M$, modulo $\mathbb{R}$-translation.  Thus we are able to recover some version of positivity of intersections: according to~\cite[Lemma A.3]{wendl2010strongly} (c.f.~\cite{siefring2008relative,siefring2011intersection}), no distinct pair of curves in $\mathcal{M}(e_1,e_2)/\mathbb{R}$ can intersect, provided at least one of the curves has trivial normal Chern number.  In fact, if
\[
\tilde{u}\colon\dot{\Sigma} \to \mathbb{R}\times M
\]
is any $J$-holomorphic curve which intersects neither $u_+$ nor $u_-$, then $\pi_M\circ\tilde{u}$ must be disjoint from any curve $\pi_M\circ u$ in $\mathcal{M}(e_1,e_2)/\mathbb{R}$ to which it is not equal.\\

With this interpretation established, we claim that $\mathcal{M}(e_1,e_2)/\mathbb{R}$ is not compact.  Indeed, suppose $\mathcal{M}(e_1,e_2)/\mathbb{R}$ contains a component diffeomorphic to $S^1$.  Each curve in this component has a cylindrical end which limits to $e_1$, and these ends are pairwise disjoint; it follows that these cylindrical ends sweep out a neighborhood of $e_1$ diffeomorphic to $S^1\times D^2$.  That is, there is a neighborhood $N(e_1)$ of $e_1$, each of whose points lies on the image of some unique curve in $\mathcal{M}(e_1,e_2)/\mathbb{R}$.  But $N(e_1)$ must intersect the neighborhood $N^+_2$ constructed in Lemma~\ref{lemma:bypasschange}, meaning that some curve in $\mathcal{M}(e_1,e_2)/\mathbb{R}$ intersects $\pi_M\circ u^{\pm}_{i,j,k}$ for some admissible $\{i,j,k\}$.  But neither $u_+$ nor $u_-$ intersects any $u^{\pm}_{i,j,k}$, since the former curves project to $T=T^2\times\{0\}$, while the latter project to $\Sigma^\pm$, and we have previously computed that all of these curves have trivial normal Chern number.  So the above discussion of intersections tells us that we have a contradiction.  We conclude that $\mathcal{M}(e_1,e_2)/\mathbb{R}$ is not compact, and the compactification $\overline{\mathcal{M}(e_1,e_2)/\mathbb{R}}$ is therefore a disjoint union of closed intervals.  We now investigate $\partial\overline{\mathcal{M}(e_1,e_2)/\mathbb{R}}$.\\

Recall that the Reeb actions of $e_1$ and $e_2$ satisfy $\mathcal{A}_\alpha(e_1) = \mathcal{A}_\alpha(e_2)$, and that the only Reeb orbits with smaller action\footnote{The orbits $h_2^\pm$ were constructed in Lemma~\ref{lemma:convex-torus-nbhd} to have Reeb action smaller than $\mathcal{A}_\alpha(e_2)$; the analysis carried out in the present lemma tells us that this \emph{must} be the case, since $\mathcal{M}(e_1,e_2)/\mathbb{R}$ is not compact.  Indeed, while we could relax the lower bound $K>2$ by manipulating the role of $\delta$ in the proof of Lemma~\ref{lemma:convex-torus-nbhd}, we cannot bring this lower bound below the action of the hyperbolic orbit created by the concave-to-convex modification.  We thank an anonymous referee for this observation.} are $h_2^+$ and $h_2^-$.  When enumerating the buildings which might appear in $\partial\overline{\mathcal{M}(e_1,e_2)/\mathbb{R}}$, these are the only orbits we need consider.  Moreover, because the curves in $\mathcal{M}(e_1,e_2)/\RR$ are disjoint from the projections $\pi_M(u^{\pm}_{i,j,k})$, their images must be contained in $N_1^+\cup N_1^-$.\\

With these restrictions, we find just four boundary elements.  We see that $\partial\overline{\mathcal{M}(e_1,e_2)/\mathbb{R}}$ can contain a cylinder positively asymptotic to $e_2$ and negatively asymptotic to $h_2^+$ followed by a cylinder positively asymptotic to $e_1$ and $h_2^+$. The same is true for $h_2^+$ replaced by $h_2^-$.  These buildings are the desired boundary elements.  Indeed, these are the only holomorphic buildings contained in $N_1^+$ or $N_1^-$ --- neighborhoods identified in the discussion preceding Lemma~\ref{lemma:walls} --- having at least one level with a curve positively asymptotic to $e_2$.  We conclude that $\overline{\mathcal{M}(e_1,e_2)/\mathbb{R}}$ consists of two closed intervals, as desired.
\end{proof}

In order to cut along $T$, we will push an index 1 family of curves into the filling $(W,\omega)$.  In particular, Lemma~\ref{lemma:lifts-with-asymptotic-ends} provided us with an index 2 family $\mathcal{M}(e_1,e_2)/\mathbb{R}$, and Lemma~\ref{lemma:compactification} then leads us to consider the family $\mathcal{M}_{\widehat{W}}(e_1,h^+_2)$ consisting of holomorphic cylinders in $\widehat{W}$ that limit to $e_1$ and $h^+_2$ at the positive ends and represent the same homology class as $v^+_{0,+}$ or $v^+_{0,-}$.  The proof of the following lemma will make clear that, while our argument requires a basic slice on each side of the mixed torus, the family of holomorphic curves which we construct can be thought of as existing only on one side of the torus.  By replacing $\mathcal{M}_{\widehat{W}}(e_1,h^+_2)$ with $\mathcal{M}_{\widehat{W}}(e_1,h^-_2)$ we can take this family of curves to lie on either side of the torus, but Lemma~\ref{lemma:compactification} used surfaces $\Sigma^\pm$ on either side of $T$ in order to produce the compactification of $\mathcal{M}(e_1,e_2)/\mathbb{R}$.  If the basic slices abutting $T$ were of the same sign, the corresponding bypass half-disks would have their vertices on distinct components of $\Gamma_T$, and the surfaces $\Sigma^\pm$ produced in Lemma~\ref{lemma:bypasschange} would be asymmetric.  The symmetry of these surfaces is a crucial part of the analysis in Lemma~\ref{lemma:compactification}.

\begin{lemma}\label{lemma:1-param-family}
There is a regular 1-parameter family
\[
\mathcal{S} = \{u_t : (\mathbb{R} \times S^1,j) \to (\widehat W, J)\}
\]
of holomorphic cylinders in $(\widehat W,\widehat\omega)$ parametrized by $t \in \mathbb{R}$ satisfying:
\begin{enumerate}
	\item[(C1)] When $t \gg 0$, the images of the curves $u_t$ and $u_{-t}$ are in the symplectization $[0,\infty) \times M$.
	
	\item[(C2)] When $t \gg 0$, the composition $\pi\circ u_{\pm t}$, where $\pi : [0,\infty) \times M \to M$ is the obvious projection, is an embedding with image $R_{\pm}(\tilde{T})$, where $\tilde{T}\subset M$ is a convex torus isotopic to $T$ through convex tori.
	
	\item[(C3)] Each $u_t$ is an embedding, and $\mathrm{im}(u_t)\cap \mathrm{im}(u_{t'})=\emptyset$ if $t\not = t'$.
\end{enumerate}
\end{lemma}

\begin{proof}
Consider the index 1 family $\mathcal{M}_{\widehat{W}}(e_1,h^+_2)$ consisting of holomorphic cylinders in $\widehat{W}$ that limit to $e_1$ and $h^+_2$ at the positive ends and represent the same homology class as $v^+_{0,+}$ or $v^+_{0,-}$ from Lemma~\ref{lemma:compactification}.  (Note that we are considering cylinders in $\widehat{W}$, while Lemma~\ref{lemma:compactification} studied cylinders in the symplectization.)  Our family of holomorphic cylinders will be taken from $\mathcal{M}_{\widehat{W}}(e_1,h^+_2)$. \\

We begin by proving (C3), under the assumption that $u_t$ and $u_{t'}$ are distinct elements of $\mathcal{M}_{\widehat{W}}(e_1,h^+_2)$.  For this we appeal to the intersection theory of punctured holomorphic curves found in \cite{siefring2008relative, siefring2011intersection}; a summary of this theory may be found in \cite[Appendix A.3]{wendl2010strongly}.  Specifically, for any $t, t'$, Siefring provides a homotopy invariant intersection number $i(u_t;u_{t'})$ whose vanishing implies that $u_t$ and $u_{t'}$ have no isolated intersections.  If we take $u_t$ and $u_{t'}$ to be translations of $v^+_{0,+}$ (or of $v^+_{0,-}$) by some $t+c$, where $c$ is a constant, then the images of $\pi_M\circ u_t$ and $\pi_M\circ u_{t'}$ are identical.  So \cite[Lemma A.3]{wendl2010strongly} implies that $i(u_t;u_{t'})=0$, provided that the normal Chern number $c_N(u_t)$ vanishes.  Indeed, from \cite[Equation (A.2)]{wendl2010strongly} we find
\[
2c_N(u_t) = \mathrm{ind}(u_t) - 2 + 2g + \#\Gamma_0 = 1 - 2 + 0 + 1 = 0,
\]
where $g$ is the genus of the domain of $u_t$ and $\Gamma_0$ is the set of even punctures of $u_t$ --- i.e., the set of punctures at which $u_t$ is asymptotic to an even orbit.  The intersection number $i(u_t;u_{t'})$ is invariant under deformations of $u_t$ and $u_{t'}$ through $\mathcal{M}_{\widehat{W}}(e_1,h_2^+)$, provided $t\ne t'$, and thus the images of $u_t$ and $u_{t'}$ have no isolated intersections, for any $t\ne t'$.  Similarly, the vanishing of $i(u_t;u_t)$ implies that the \emph{singularity number} $\mathrm{sing}(u_t)$ must vanish, and because this number is invariant under deformations we conclude that each $u_t$ is an embedding.\\

Next, we claim that $\mathcal{M}_{\widehat{W}}(e_1,h^+_2)$ contains a noncompact component which interpolates between $v^+_{0,+}$ and $v^+_{0,-}$.  The failure of $\mathcal{M}_{\widehat{W}}(e_1,h^+_2)$ to be compact follows as in Lemma~\ref{lemma:compactification}.  Namely, the analysis of the previous paragraph may be applied to show that the elements of $\mathcal{M}_{\widehat{W}}(e_1,h^+_2)$ are disjoint from the ``walls" $u_{\pm},u^\pm_{1,3^+,4^+}$ and $u^{\pm}_{1,3^-,4^-}$ and their $\mathbb{R}$-translations, since the projections of $v^+_{0,+}$ and $v^+_{0,-}$ to $M$ are disjoint from those of these walls, each of the walls has trivial normal Chern number, and intersection numbers are invariant under deformations through the moduli space.  Because the curves in $\mathcal{M}_{\widehat{W}}(e_1,h^+_2)$ are pairwise disjoint, an $S^1$-family of curves in $\mathcal{M}_{\widehat{W}}(e_1,h^+_2)$ would have projections to $M$ which encircle $e_1$, requiring intersections with the walls.  We conclude that $\mathcal{M}_{\widehat{W}}(e_1,h^+_2)$ contains the desired noncompact component.\\

Let us now consider the noncompact ends of $\mathcal{M}_{\widehat{W}}(e_1,h^+_2)$.  First we claim that bubbles are not a concern.  Because $(\widehat{W},J)$ has dimension 4, it is semipositive (c.f. \cite[Section 5.1 \& Theorem 5.2.1]{mcduff1994j}) and thus admits no holomorphic spheres of negative index.  By automatic transversality results in dimension 4 (c.f. \cite[Theorem 2.44]{wendl2018holomorphic}, \cite{gromov1985pseudo}, \cite{hofer1997genericity}), this means that all holomorphic spheres in $(\widehat{W},J)$ are regular, and thus that no bubbles appear in the compactification of our index 1 family.  We therefore consider the holomorphic buildings into which elements of $\mathcal{M}_{\widehat{W}}(e_1,h^+_2)$ might break.  Say $w$ is the topmost element of such a building, with image in $\mathbb{R}\times M$.  The positive end of $w$ consists of one or both of the Reeb orbits $e_1$ and $h_2^+$.  The walls constructed above ensure that $\pi_M\circ w$ lies in $N_1^+$, since this is the case for all curves in $\mathcal{M}_{\widehat{W}}(e_1,h^+_2)$, and thus part~\ref{cond:reeb-actions} of Lemma~\ref{lemma:bypasschange} tells us that the negative end of $w$ is made up of some (possibly empty) collection of the orbits $e_1,e_2$, and $h_2^{+}$, lest the total action at the positive end of $w$ fail to exceed that at the negative end.  In fact, those action bounds tell us that the positive end of $w$ must contain $e_1$, since $N_1^+$ contains no Reeb orbits of action smaller than that of $h_2^+$.\\

We use homology calculations to continue our analysis of $w$.  Assume without loss of generality that the slopes of $\Gamma_{T^2\times\{0\}}$ and $\Gamma_{T^2\times\{1\}}$ are 0 and 1, respectively.  Under the identification $H_1(T^2\times[-1,1])\simeq H_1(T^2)\simeq \ZZ^2$, we can take
\[
[e_1] = (0,-1)
\quad\text{and}\quad
[e_2] = (0,1),
\]
meaning that $[h_2^+]=(0,1)$.  Now if the positive end of $w$ is made up of $e_1$ alone, then the action bounds of Lemma~\ref{lemma:bypasschange} tell us that the negative end is either empty or comprised of some number of copies of $h_2^+$.  But there is no nonnegative integer $n\geq 0$ such that
\[
[e_1] = (0,-1) = n\,(0,1) = n\,[h_2^+],
\]
and thus the positive end of $w$ must be made up of both $e_1$ and $h_2^+$.  With this positive end, the action bounds of Lemma~\ref{lemma:bypasschange} leave four possibilities for the negative end:
\begin{enumerate}
\item $e_1$ is alone at the negative end;
\item $e_2$ is alone at the negative end;
\item the negative end is some number of copies of $h_2^+$;
\item the negative end is empty.
\end{enumerate}
The first three possibilities are incompatible with the fact that $w$ has a nullhomologous positive end, since none of these potential negative ends are nullhomologous.  We conclude that the negative end of $w$ is empty, and thus that $w=v^+_{0,+}$ or $w=v^+_{0,-}$ --- that is, no breaking has occurred.\\

So $\mathcal{M}_{\widehat{W}}(e_1,h^+_2)$ provides an interval of holomorphic curves interpolating between $v^+_{0,-}$ and $v^+_{0,+}$, and serves as the middle portion of our family $\mathcal{S}$.  For $t\gg 0$ we take $u_t$ (respectively, \ $u_{-t}$) to be a translation of $v^+_{0,+}$ (respectively, $v^+_{0,-}$) by some $t+c$, where $c$ is a constant, viewed inside the symplectization part $[0,\infty)\times M$. This implies (C1).\\

Next, we consider the projections $\pi\circ u_t$, with $|t|\gg 0$.  Following Hofer-Wysocki-Zehnder \cite{hofer1995properties}, we let $\Pi_\alpha\colon TM\to\xi$ denote the bundle map corresponding to projection along $R_\alpha$ and study the section $\Pi_\alpha\circ d(\pi\circ u_t)$ of $u_t^*\xi$.  Note that this section is not identically zero, since the ends of $u_+$ are asymptotic to distinct Reeb orbits.  In \cite[Section 5]{hofer1995properties}, Hofer-Wysocki-Zehnder define the winding number $\mathrm{wind}_{\Pi_\alpha}(u_t)$ to be the sum of the local indices of the zeros of $\Pi_\alpha\circ d(\pi\circ u_t)$ with respect to any trivialization; these indices are necessarily non-negative.  From \cite[Theorem 5.8]{hofer1995properties} we learn that $\mathrm{wind}_{\Pi_\alpha}(u_t)=0$, since $u_t$ has index 1, and thus conclude that $\Pi_\alpha\circ d(\pi\circ u_t)$ is nowhere zero.\\

Now because the almost complex structure $J$ restricts to an isomorphism of $\xi$, the section $\Pi_\alpha\circ d(\pi\circ u_t)$ has rank 2 wherever it is nonzero.  Indeed, the fact that $\Pi_\alpha\circ (d\pi\circ u_t)$ is nowhere zero means that the Reeb vector field $R_\alpha$ is everywhere transverse to the image of $\pi\circ u_t$.  The upshot is that the image of $\pi\circ u_t$ is a smooth, convex annulus in $M$.  By choosing the functions $f,g$ in Lemma~\ref{lemma:convex-torus-nbhd} to be sufficiently symmetric about the hyperbolic critical point that is introduced, we obtain an almost complex structure $J$ which is symmetric about the Reeb orbits, and thus conclude that $v^+_{0,-}$ and $v^+_{0,+}$ approach the orbits from opposing angles.  As a result, the union of the images of $\pi\circ v^+_{0,\pm}$ with the orbits $e_1$ and $h_2^+$ yields the smooth torus $\tilde{T}$.  Moreover, because $e_1$ and $h_2^+$ are Reeb orbits, the characteristic foliation of $\tilde{T}$ is transverse to these orbits, and indeed these orbits form a dividing set for the characteristic foliation.  So $\tilde{T}$ is convex and (C2) is established.
\end{proof}

We refer to the open solid torus in $\widehat{W}$ swept out by $\mathcal{S}$ as $S$, and we now begin working to remove $S\cap W$ from $W$.  First, however, we modify $W$ slightly.  For some $R\gg 0$, we consider the partial completion
\[
W_R := W \cup ([0,R]\times M).
\]
We denote by $\pi_R$ the projection map $\pi_R\colon[0,R]\times M \to M$, and assume that $R$ has been chosen sufficiently large as to ensure the existence of curves $u_{\pm t_0}$ whose images are contained in $[0,\infty)\times M$ and which satisfy
\[
\mathrm{im}(\pi_R\circ u_{\pm t_0}) = R'_{\pm}(\tilde{T}).
\]
Here, as before, $u_{\pm t_0}$ is an element of the family $\mathcal{S}$ and $R'_{\pm}(\tilde{T})$ is the result of removing from $R_\pm(\tilde{T})$ a small collar neighborhood of the boundary.  Our choice of $R$ ensures that the portion of $u_{\pm t_0}$ truncated by $\pi_R$ lies in the collar neighborhood of $\partial R_\pm(\tilde{T})$.  Next we consider $\tilde{N}(\Gamma_{\tilde{T}})$, a small (half-)tubular neighborhood of $\{R\}\times\Gamma_{\tilde{T}}$ in $W_R$.  We remove this neighborhood to produce
\[
W'_R := W_R - \tilde{N}(\Gamma_{\tilde{T}}).
\]
We decompose the boundary of $W'_R$ along its corners; in particular, we define \emph{horizontal} and \emph{vertical} boundary components
\[
\partial_hW_R' := \partial W'_R - \partial W_R \simeq (S^1\times D^2) \sqcup (S^1\times D^2)
\quad\text{and}\quad
\partial_vW_R' := \partial W'_R - \partial_hW'_R.
\]
Moreover, we assume that $\tilde{N}(\Gamma_{\tilde{T}})$ has been chosen to satisfy $\{R\}\times R_{\pm}'(\tilde{T}) = (\{R\}\times R_{\pm}(\tilde{T})) - \tilde{N}(\Gamma_{\tilde{T}})$.\\

Though possibly dubious, the terminology for $\partial_hW_R'$ and $\partial_vW_R'$ is motivated by interpreting $\mathcal{S}$ as giving us "half an open book decomposition."  We think of the solid torus $S$ as divided between a neighborhood of $\Gamma_{\tilde{T}}$ --- the "binding" --- and a region which is foliated by $\Sigma_t := \mathrm{im}(u_t)$.  The foliated portion is then analogous (at least in the eyes of the authors) to the vertical boundary of a Lefschetz fibration, while $\partial\tilde{N}(\Gamma_{\tilde{T}})$ corresponds to the horizontal boundary.  See Figure~\ref{fig:foliated}.\\

\begin{figure}
	\centering
	\begin{tikzpicture}[scale=0.5, every node/.style={scale=1.25}]
	\draw (0,0) +(90:18cm and 6cm) arc (90:270:18cm and 6cm);	
	\draw[red,thick] (0,0) +(170:5cm and 6cm) arc (170:210:5cm and 6cm);
	\draw[red,thick] (0,0) +(-10:5cm and 6cm) arc (-10:30:5cm and 6cm);
	\draw[blue] (0,0) +(30:5cm and 6cm) arc (30:170:5cm and 6cm);
	\draw[blue] (0,0) +(210:5cm and 6cm) arc (210:350:5cm and 6cm);
	
	\draw[blue] (-1.9,0.02) +(190:5cm and 6cm) arc (261.5:180:13.1cm and 1cm);
	\draw (-4.627,-0.979)+(97:2.25cm and 2.05cm) arc (97:277.5:2.25cm and 2.05cm);
	\draw[red,thick] (0,0) +(190:5cm and 6cm) arc (270:261.5:13.1cm and 1cm);
	\begin{scope}[xshift=-5.5cm,yshift=-0.25cm]
		\draw[blue,rotate=45] (0,0)+(252.5:6.78cm and 1cm) arc (252.5:195:6.78cm and 1cm);
		\draw[red,thick,rotate=45] (0,0)+(270:6.78cm and 1cm) arc (270:252.5:6.78cm and 1cm);
	\end{scope}
	\begin{scope}[xshift=-4.25cm,yshift=-0.25cm]
		\draw[blue,rotate=-45] (0,0)+(255:7.58cm and 1cm) arc (255:180:7.58cm and 1cm);
		\draw[red,thick,rotate=-45] (0,0)+(270:7.58cm and 1cm) arc (270:255:7.58cm and 1cm);
	\end{scope}

	\draw[blue,dashed] (4.8,0.05) +(10:5cm and 6cm) +(96.25:22.8cm and 1cm) arc (96.25:180:22.8cm and 1cm);
	\draw (4.627,0.979)+(97:2.25cm and 2.05cm) arc (97:277.5:2.25cm and 2.05cm);
	\draw[red,thick,dashed] (0,0) +(10:5cm and 6cm) arc (90:96.25:22.8cm and 1cm);
	
	\begin{scope}[xshift=5.15cm,yshift=0.04cm]
		\draw[blue,dashed,rotate=19] (0,0)+(98.5:16.02cm and 1cm) arc (98.5:175:16.02cm and 1cm);
		\draw[red,dashed,thick,rotate=19] (0,0)+(90:16.02cm and 1cm) arc (90:98.5:16.02cm and 1cm);
	\end{scope}
	\begin{scope}[xshift=4.55cm,yshift=0.1cm]
		\draw[blue,dashed,rotate=-19] (0,0)+(99:15cm and 1cm) arc (99:180:15cm and 1cm);
		\draw[red,thick,dashed,rotate=-19] (0,0)+(90:15cm and 1cm) arc (90:99:15cm and 1cm);
	\end{scope}

	\filldraw[red] (-4.92,-1.04) circle (4pt);
	\filldraw[red] (4.92,1.04) circle (4pt);
	
	\draw[->] (-20,-6) -- (-20,6);
	\draw[->] (-18,-7) -- (5,-9);
	
	\draw[right] (-20,4) node {$M$};
	\draw[above] (3,-8.75) node {$\mathbb{R}$};
	\draw (-15,4.1) node {$S'$};	
	
	\draw[right,red] (-4.92,-1.04) node {$e_1$};
	\draw[right,red] (4.92,1.04) node {$h^+_2$};
	\draw[right,blue] (4.3,4.04) node {$\Sigma_{t_0+1}$};
	\draw[right,blue] (4.92,-2.5) node {$\Sigma_{-(t_0+1)}$};
	\draw[left,blue] (-18,0) node {$\Sigma_0$};
	\draw[below] (0,-6.5) node {$\{a=R\}$};
	\end{tikzpicture}
\caption{We remove a neighborhood of $\Gamma_{\tilde{T}}$ from $W_R$ to produce $W_R'$, and then construct a family of annuli $\Sigma_t$ which foliate a solid torus in $W_R'$.}
\label{fig:foliated}
\end{figure}

We are nearly ready to split our symplectic filling, but first must normalize the Liouville form along the region which will be removed.  To this end, the following lemma reparametrizes the foliation of our solid torus in $W_R'$.

\begin{lemma}\label{lemma:foliated-torus}
There exists an embedding $[-(t_0+1),t_0+1]\times\Sigma\subset W'_R$ such that:
\begin{enumerate}[label=(\arabic*)]
	\item each $\Sigma_t:=\{t\}\times\Sigma$ is an annulus and is a symplectic submanifold of $W'_R$, for $t\in[-(t_0+1),t_0+1]$;
	\item $\Sigma_{\pm(t_0+1)}=\{R\}\times R_{\pm}'(\tilde{T})$ is transverse in $\partial W_R'$ to the Reeb vector field;
	\item for $t\in[-(t_0+1),t_0+1]$,
	\[
	\partial\Sigma_t = (S^1\times\gamma(t)) \sqcup (S^1\times\gamma(t)) \subset (S^1\times D^2) \sqcup (S^1\times D^2) = \partial_hW'_R,
	\]
	where $\gamma$ parametrizes an arc from $(-1,0)$ to $(1,0)$ in $\partial D^2$.	
\end{enumerate}
\end{lemma}

\begin{proof}
Note that we already have a family $\Sigma_t:=\mathrm{im}(u_t)\cap W'_R$, $t\in[-t_0,t_0]$, satisfying the first and last conditions.  It remains to reparametrize $\Sigma_t$, for $|t|$ between $t_0$ and $t_0+1$, so that the ends of our family lie in a single level of the symplectization --- that is, so that these ends lie in $\partial W'_R$.\\

Let us consider $u_t\colon (\mathbb{R}\times S^1,j)\to ([0,\infty)\times M,J)$, for some $|t|\gg 0$.  In the proof of Lemma~\ref{lemma:1-param-family} we saw that $\mathrm{im}(u_t)$ is transverse to the distribution $\mathbb{R}\langle \partial_a,R_\alpha\rangle$ in $[0,\infty)\times M$, so we may identify a neighborhood of $\mathrm{im}(u_t)$ with $[0,\infty)_a\times [-\epsilon,\epsilon]_z\times(\mathbb{R}\times S^1)$ in such a manner that the first two components correspond to $\partial_a$ and $R_\alpha$, respectively, and the curve $u_t$ has the form
\[
u_t(\mathbf{x}) = (f(\mathbf{x}) + t, 0, \mathbf{x}),
\]
where $f$ is a Morse function on the annulus which is compatible with the Weinstein structure.  This matches the form of the curves constructed in Lemma~\ref{lemma:lift-regions}.  Let us define $\tilde{f}(\mathbf{x}) := f(\mathbf{x}) + t_0 + 1 - R$ and observe that the image of
\begin{equation}\label{eq:symplectic-surface}
\mathbf{x} \mapsto (R-(t_0+1)+c\,\tilde{f}(\mathbf{x})+t,0,\mathbf{x})
\end{equation}
in $[0,\infty)\times M$ is a symplectic surface, for any $c\in[0,1]$.  Indeed, the symplectic form on $[0,\infty)\times[-\epsilon,\epsilon]\times(\mathbb{R}\times S^1)$ is given by
\[
\omega = d(e^a(dz+\beta)) = e^a\,da\wedge(dz+\beta) + e^a\,d\beta,
\]
where $\beta=-df\circ j$ is our Liouville form on the annulus.  Along the image of the map defined in~(\ref{eq:symplectic-surface}) we have $dz=0$ and $da=c\,d\tilde{f}=c\,df$, so $\omega$ restricts to this surface as $c\,e^a\,df\wedge\beta + e^a\,d\beta$.  Because $df\wedge\beta\geq 0$ and $d\beta>0$, we conclude that this image is a symplectic surface, as desired.\\

We may now interpolate from the current ends $\Sigma_{\pm t_0}$ of our family to ends with the desired properties.  In particular, we may choose a smooth function $c\colon(0,\infty)\to[0,1]$ such that $c(t)=1$ for $t\leq t_0$ and $c(t)=0$ for $t\geq t_0+1$ and set
\[
\Sigma_t := \mathrm{im}(R-(t_0+1)+c(t)\,\tilde{f}(\mathbf{x})+t,0,\mathbf{x})\cap W'_R,
\]
for $t\in(t_0,t_0+1]$.  Notice that $\Sigma_{t_0+1} = \mathrm{im}(R,0,\mathbf{x})\cap W'_R$, as desired.  Following an analogous interpolation at the other end of our family of annuli, we obtain the desired embedding.
\end{proof}

We now have a solid torus $S':=[-(t_0+1),t_0+1]\times\Sigma$ in $W'_R$, and we will remove a neighborhood $N(S')$ of $S'$ to produce the filling $W'$ promised by Theorem~\ref{thm:main-thm}.  Topologically, $N(S')\simeq S^1\times D^1\times D^2$ is a round 1-handle; it remains to realize $N(S')$ as a \emph{symplectic} round 1-handle.  This is accomplished with the following lemma, the proof of which was outlined to the authors by Ko Honda; we also thank Hyunki Min for correcting a mistake in an earlier version of this lemma.

\begin{lemma}\label{lemma:exact-round-handle}
Suppose that the original filling $(W,\omega)$ is exact, with Liouville form $\beta$.  Then, after adjustments of $S'$ and $W'_R$, there exist:
\begin{itemize}
	\item a neighborhood $N(S')=S'\times[-\delta,\delta]_w\subset W'_R$;
	\item a 1-form $\lambda=\lambda_\Sigma + \lambda_B$ on $N(S')$;
	\item a decomposition of $\partial N(S')$ into $\partial_{\mathrm{in}}N(S')=S'\times\{\pm\delta\}$ and $\partial_{\mathrm{out}}N(S')=(\partial S')\times[-\delta,\delta]$;
\end{itemize}
such that:
\begin{enumerate}
	\item $\{\pm(t_0+1)\}\times\Sigma\times[-\delta,\delta]\subset \partial_v W'_R$ and $[-(t_0+1),t_0+1]\times\partial\Sigma\times[-\delta,\delta] \subset \partial_h W'_R$;
	\item $\lambda_\Sigma$ is a Liouville form for $\Sigma$;
	\item $\lambda_B$ is a 1-form on $B=[-(t_0+1),t_0+1]\times[-\delta,\delta]$;
	\item up to a Liouville homotopy of $\beta$, $\lambda$ agrees with $\beta$ on $N(S')$;
	\item the Liouville vector field $X_\lambda$ points into $N(S')$ along $\partial_{\mathrm{in}}N(S')$ and out of $N(S')$ along $\partial_{\mathrm{out}}N(S')$.
\end{enumerate}
\end{lemma}

\begin{proof}
Because $\{\pm(t_0+1)\}\times\Sigma\times\{0\}$ is transverse in $\partial W_R'$ to the Reeb vector field of $\iota_{\partial W_R'}^*\beta$, we may consider a neighborhood
\[
N(S') = [-(t_0+1),t_0+1]_t\times\Sigma\times[-\epsilon,\epsilon]_w \subset W_R'
\]
of $S'$ with the property that $\partial_w$ is parallel to the Reeb vector field of $\iota_{\partial W_R'}^*\beta$ near $\{t=\pm(t_0+1)\}$.  Namely, our coordinate system is chosen so that
\[
\iota^*_{\{t=T\}}\beta = \lambda_{T,0} + T\,dw,
\]
for any fixed $T$ sufficiently near $\pm(t_0+1)$, where $\lambda_{t,w}$ denotes the restriction of $\beta$ to
\[
\Sigma_{t,w} := \{t\}\times \Sigma\times\{w\} \subset N(S'),
\]
for any $(t,w)\in[-(t_0+1),t_0+1]\times[-\epsilon,\epsilon]$.  In particular, the Liouville forms $\lambda_{T,w}$ are independent of $w\in[-\epsilon,\epsilon]$ for $T$ near $\pm(t_0+1)$.  The construction of $\Sigma_{\pm(t_0+1)}$ in Lemma~\ref{lemma:foliated-torus} is symmetric in $t$, and thus we may denote by $\lambda_\Sigma$ the common 1-form $\lambda_{\pm(t_0+1),w}$.\\

Next, we modify $N(S')$, as well as the identification of each $\Sigma_{t,w}$ with $\Sigma$, to ensure that the restriction of $\lambda_{t,w}$ to $\partial\Sigma_{t,w}$ is independent of $(t,w)$.  To this end, let us consider the $\lambda_{t,w}$-length of $\partial\Sigma_{t,w}$.  The compactness of $[-(t_0+1),t_0+1]_t\times[-\epsilon,\epsilon]_w$ provides a uniform lower bound for the $d\lambda_{t,w}$-area of $\Sigma_{t,w}$, achieved by at least one pair $(t,w)$; by extending the neighborhood of $\Gamma_{\tilde{T}}$ used in the construction of $W'_R$, we may excise a portion of each $\Sigma_{t,w}$ so that the symplectic area of $\Sigma_{t,w}$ --- and thus the $\lambda_{t,w}$-length of $\partial\Sigma_{t,w}$ --- is independent of $(t,w)$.  Having made this modification, let us denote the present identification of $\Sigma_{t,w}$ with $\Sigma$ (that is, the identification given by the coordinates on $N(S')$) by $\psi_{t,w}\colon\Sigma\to\Sigma_{t,w}$.  Because each of the 1-forms $\psi^*_{t,w}\lambda_{t,w}$ assigns the same length to $\partial\Sigma$ as does $\lambda_\Sigma$, a standard application of the Moser argument produces diffeomorphisms $\tilde{\phi}_{t,w}\colon\partial\Sigma\to\partial\Sigma$ such that $\tilde{\phi}^*_{t,w}((\psi^*_{t,w}\lambda_{t,w})\vert_{\partial\Sigma})=\lambda_\Sigma\vert_{\partial\Sigma}$.\\

Specifically, the Moser argument for each $(t,w)$ proceeds as follows: because $\psi^*_{t,w}\lambda_{t,w}$ and $\lambda_\Sigma$ assign the same length to $\partial\Sigma$, there exists a function $f_{t,w}\colon\partial\Sigma\to\mathbb{R}$ so that $df_{t,w}=(\psi^*_{t,w}\lambda_{t,w}-\lambda_\Sigma)\vert_{\partial\Sigma}$.  We then set
\[
\lambda_{s,(t,w)} := (s\,\psi^*_{t,w}\lambda_{t,w} + (1-s)\lambda_\Sigma)\vert_{\partial\Sigma} = (\psi^*_{t,w}\lambda_{t,w})\vert_{\partial\Sigma} + s\,df_{t,w}
\]
and take $X_{s,(t,w)}$ to be the unique $s$-dependent vector field on $\partial\Sigma$ with $\iota_{X_{s,(t,w)}}\lambda_{s,(t,w)}=-f_{t,w}$.  It then follows that
\[
\dfrac{d}{ds}\lambda_{s,(t,w)} + \mathcal{L}_{X_{s,(t,w)}}\lambda_{s,(t,w)} = df_{t,w} + d(\iota_{X_{s,(t,w)}}\lambda_{s,(t,w)}) = d(f_{t,w} + \iota_{X_{s,(t,w)}}\lambda_{s,(t,w)}) = 0.
\]
Thus we see that the flow $\tilde{\phi}_{s,(t,w)}\colon\partial\Sigma\to\partial\Sigma$, $s\in[0,1]$, of $X_{s,(t,w)}$ is well-defined and has $\tilde{\phi}_{s,(t,w)}^*(\lambda_{s,(t,w)})=\lambda_\Sigma\vert_{\partial\Sigma}$.  In particular, we obtain the diffeomorphism $\tilde{\phi}_{t,w}:=\tilde{\phi}_{1,(t,w)}$.\\

These details are included so that we may observe the smooth dependence of $f_{t,w}$, $X_{s,(t,w)}$, and finally $\tilde{\phi}_{t,w}$ on $(t,w)$.  Finally, we extend the diffeomorphisms $\tilde{\phi}_{t,w}\colon\partial\Sigma\to\partial\Sigma$ to $\phi_{t,w}\colon\Sigma\to\Sigma$ (again, smoothly in $(t,w)$) and replace each identification $\psi_{t,w}$ of $\Sigma_{t,w}$ with $\Sigma$ with the identification $\psi_{t,w}\circ\phi_{t,w}$.  We have now ensured that $\lambda_{t,w}$ restricts to $\partial\Sigma_{t,w}$ in a manner which is independent of $(t,w)$.  Notice that this step adjusts $N(S')$, but leaves $\beta$ unchanged.\\

Now that we have normalized $\beta$ on $\partial([-(t_0+1),t_0+1]\times\Sigma)\times[-\epsilon,\epsilon]$, we begin the normalization process on
\[
S' = [-(t_0+1),t_0+1]\times\Sigma\times\{0\},
\]
We write $\beta\vert_{S'}=\lambda_t+f\, dt$ for some smooth function $f\colon S'\to\mathbb{R}$, where $\lambda_t:=\lambda_{t,0}$.  From our construction of $S'$ in Lemma~\ref{lemma:1-param-family} and Lemma~\ref{lemma:foliated-torus} we see that $\partial_t$ is parallel to the Liouville vector field of $\beta$ for $t$ sufficiently near $\pm(t_0+1)$, and thus $f$ vanishes in this portion of $S'$.  Now along $S'$ we have
\[
d\beta\vert_{S'} = d_2\lambda_t + (d_2f-\dot{\lambda}_t)dt,
\]
with $d_2$ denoting the derivative in the $\Sigma$-direction.  Our earlier normalization efforts ensured that, for each $t\in[-(t_0+1),t_0+1]$, the area form $d_2\lambda_t$ on $\Sigma_{t,0}$ agrees with $d\lambda_\Sigma$ near $\partial\Sigma_{t,0}$.  Moreover, $d_2\lambda_{\pm(t_0+1)}$ agrees with $d\lambda_\Sigma$ on all of $\Sigma_{\pm(t_0+1),0}$.  So $d_2\lambda_t=d\lambda_\Sigma$ in a neighborhood of $\partial S'$, and another standard Moser argument therefore constructs for us a $\Sigma$-fiberwise diffeomorphism of $S'$, supported in the interior of $S'$, after which we have $d_2\lambda_t=d\lambda_\Sigma$, for all $t\in[-(t_0+1),t_0+1]$.\\

Notice that the characteristic line field $\ker(d\beta\vert_{S'})$ is transverse to the $\Sigma$-fibers of $S'$.  Specifically, we now have
\[
d\beta\vert_{S'}=d\lambda_\Sigma+(d_2f-\dot{\lambda}_t)dt,
\]
and therefore $\ker(d\beta\vert_{S'})$ is directed by $\partial_t+X$, where $X$ is the ($t$-dependent) vector field on $\Sigma$ which satisfies $\iota_Xd\lambda_\Sigma=\dot{\lambda}_t-d_2f$.  The Lie derivative $\mathcal{L}_Xd\lambda_\Sigma$ --- computed on $\Sigma$ --- then satisfies
\[
\mathcal{L}_Xd\lambda_\Sigma = d(\iota_Xd\lambda_\Sigma) = d_2(\dot{\lambda}_t-d_2f) = \dfrac{\partial}{\partial t}\left(d_2\lambda_t-d_2d_2f\right) = \dfrac{\partial}{\partial t}\left(d\lambda_\Sigma\right) = 0,
\]
and thus the time-one map of the flow of $X$ is a self-diffeomorphism of $S'$ which preserves the identity $d_2\lambda_t=d\lambda_\Sigma$.  Following this re-identification of $S'$ with $[-(t_0+1),t_0+1]\times\Sigma\times\{0\}$, $\ker(d\beta\vert_{S'})$ is directed by $\partial_t$, and so we have
\[
0 = \iota_{\partial_t}\left(d\beta\vert_{S'}\right) = d_2f-\dot{\lambda}_t.
\]
Summarizing, we have now ensured that $\beta$ restricts to $S'$ as $\beta\vert_{S'}=\lambda_t+f\,dt$ in a manner satisfying
\begin{equation}\label{eq:normalization}
d_2f = \dot{\lambda}_t,
\quad
d_2\lambda_t = d\lambda_\Sigma,
\quad\text{and}\quad
\ker(d\beta\vert_{S'}) = \mathbb{R}\langle\partial_t\rangle,
\end{equation}
with $f$ a function on $S'$ which vanishes for sufficiently large $t$.  Our next goal is to modify $\beta$ so that the coefficient of $dt$ is independent of the $\Sigma$-coordinate.\\

From the second equation of (\ref{eq:normalization}) we see that the difference $\lambda_t-\lambda_\Sigma$ is a closed 1-form on $\Sigma$, for each $t$.  Moreover, because $\lambda_t$ agrees with $\lambda_\Sigma$ in a neighborhood of $\partial\Sigma$, this difference is supported away from $\partial\Sigma$.  Now consider an embedded arc $\gamma$ in $\Sigma$ connecting the two components of $\partial\Sigma$ and notice that
\[
\int_\gamma \left(\lambda_{-(t_0+1)}-\lambda_\Sigma\right) = \int_\gamma 0 = 0.
\]
From the first equation of (\ref{eq:normalization}) we see that
\[
\frac{\partial}{\partial t}\left(\int_\gamma \lambda_t-\lambda_\Sigma\right) = \int_\gamma\tfrac{\partial}{\partial t}\left(\lambda_t-\lambda_\Sigma\right) = \int_\gamma d_2f,
\]
and this last integral is equal to the difference in the values of $f$ at the two endpoints of $\gamma$.  Because $d_2f=\dot{\lambda}_t=0$ in a neighborhood of $\partial\Sigma$, the value of $f$ along each component of $\partial\Sigma$ is constant, and the symmetry of our construction of $W_R'$ --- namely, the symmetry between the two components of $\tilde{N}(\Gamma_{\tilde{T}})$ --- allows us to assume that these two constants are equal.  We conclude that $\tfrac{\partial}{\partial t}\left(\int_\gamma \lambda_t-\lambda_\Sigma\right)=0$, and thus that $\int_\gamma (\lambda_t-\lambda_\Sigma)=0$, for all $-(t_0+1)\leq t\leq t_0+1$.  It follows that the compactly supported 1-form $\lambda_t-\lambda_\Sigma$ admits a compactly supported primitive on $\Sigma$, and we denote by $h\colon S'\to\mathbb{R}$ a compactly supported function satisfying $d_2h=\lambda_t-\lambda_\Sigma$, for each $t$.\\

Now extend $h$ to a function on $W'_R$ which is compactly supported on $N(S')$ away from
\[
(\partial S')\times[-\epsilon,\epsilon] = N(S')\cap\partial W'_R \subset N(S')
\]
and consider the 1-form $\beta':=\beta-dh$.  Because $dh$ vanishes along $\partial W'_R$, $\beta'$ is Liouville homotopic to $\beta$.  Writing
\[
dh = d_2h + \tfrac{\partial h}{\partial t}\,dt + \tfrac{\partial h}{\partial w}\,dw,
\]
we see that
\begin{align*}
\beta'\vert_{S'} &= \beta\vert_{S'} - dh\vert_{S'} = (\lambda_t+f\,dt) - (d_2h-\tfrac{\partial h}{\partial t}\,dt)\vert_{S'}\\
	&= \lambda_t + f\,dt -\lambda_t + \lambda_\Sigma - \tfrac{\partial h}{\partial t}\,dt = \lambda_\Sigma + \left(f-\tfrac{\partial h}{\partial t}\right)\,dt.
\end{align*}
Now $d\beta'=d\beta$, so our earlier normalization of $\beta$ ensures that $\ker(d\beta'\vert_{S'})=\mathbb{R}\langle\partial_t\rangle$.  But
\[
d\beta' = d\lambda_\Sigma + d_2\left(f-\tfrac{\partial h}{\partial t}\right)\wedge dt,
\]
so we conclude that $f-\tfrac{\partial h}{\partial t}$ is independent of the $t$-coordinate, as desired, and also that $d\beta'=d\lambda_\Sigma$.  We note that this step represents our first modification of $\beta$.\\

By a change of coordinates on $N(S')$ we may assume that
\[
d\beta' = d\lambda_\Sigma + dt\wedge dw.
\]
Indeed, a standard application of the Moser technique (c.f. \cite[Exercise 3.36]{mcduff1998introduction}) shows that if $\omega_0,\omega_1$ are symplectic forms on some manifold $X$ which pull back under inclusion to the same closed 2-form on a hypersurface $Y\subset X$ (with or without boundary), then there is a symplectomorphism $\phi\colon(N_0(Y),\omega_0)\to(N_1(Y),\omega_1)$, for some neighborhoods $N_0(Y),N_1(Y)\simeq Y\times(-\epsilon,\epsilon)$ of $Y$, which restricts to the identity on $Y$.  Our change of coordinates follows by applying this fact to $\omega_0=d\lambda_\Sigma+dt\wedge dw$ and $d\beta'$.\\

By performing the above steps parametrically with respect to $w$, we may write
\begin{equation}\label{eq:beta-prime}
\beta' = \lambda_\Sigma + f\,dt + g\,dw
\end{equation}
on all of $N(S')$, for some smooth functions $f,g\colon N(S')\to\mathbb{R}$.  That is, we repeat the repeat the normalization steps which preceded the most recent change of coordinates, this time normalizing $\beta'$ on all slices
\[
S'_w = [-(t_0+1),t_0+1]\times\Sigma\times\{w\}
\]
simultaneously.  Each step is seen to depend smoothly on $w$, but we must carry out this parametric normalization separately from the original normalization on $S'$ because of the change of coordinates in the previous paragraph (which itself required the initial normalization on $S'$).  The one step which requires special attention in the parametric case is the extension of $h$ to $W'_R$.  We want $h$ to satisfy $d_2h=\lambda_{t,w}-\lambda_\Sigma$ for all pairs $(t,w)$, but also need $h$ to be compactly supported.  As before, we may take $h$ to be supported away from $(\partial S')\times\{w\}$, for each $w\in[-\epsilon,\epsilon]$.  However, we must damp $h$ out in the $w$-direction, and thus cannot guarantee the identity $d_2h=\lambda_{t,w}-\lambda_\Sigma$ for all $w\in[-\epsilon,\epsilon]$.  Nonetheless, we can obtain this normalization for all $w\in[-\epsilon/2,\epsilon/2]$ and then redefine $N(S')$ to be this smaller neighborhood where (\ref{eq:beta-prime}) holds.  The damping of $h$ with respect to $w$ affects only those portions of $N(S')$ where $h$ does not vanish, and thus has no effect near $\partial W'_R$.  We conclude that $\beta':=\beta-dh$ is, as in the 0-parametric case, Liouville homotopic to $\beta$.\\

Notice that the identity
\[
d\lambda_\Sigma + dt\wedge dw = d\beta' = d\lambda_\Sigma + d_2f\wedge dt + d_2g\wedge dw + \left(\tfrac{\partial g}{\partial t} - \tfrac{\partial f}{\partial w}\right)\,dt\wedge dw
\]
tells us that $d_2f$ and $d_2g$ vanish everywhere, and thus that we may treat $f$ and $g$ as functions of $t$ and $w$ alone.  We also see that the Liouville vector field $X$ for $\beta'$ is given by
\[
X = X_\Sigma + g\,\partial_t - f\,\partial_w,
\]
where $X_\Sigma$ is the Liouville vector field for $(\Sigma,\lambda_\Sigma)$.  Namely, $\pm g>0$ along $t=\pm(t_0+1)$.\\

It remains to modify $\beta'$ (as well as our identification of $N(S')$) so that $X$ points in along $\partial_{\mathrm{in}}N(S')$.  To this end, let us choose a bump function $\phi\colon[-\epsilon,\epsilon]\to[0,1]$ such that
\begin{itemize}
	\item $\phi\vert_{[-\epsilon/4,\epsilon/4]}\equiv 1$;
	\item $\phi$ vanishes outside of $[-\epsilon/2,\epsilon/2]$.
\end{itemize}
We also take an arbitrarily large constant $N\gg 0$; just how large $N$ must be is determined by $f,g$, and $\phi$, as explained by the computations that follow.  With $\phi$ and $N$ fixed, we define homotopies $f_\tau,g_\tau\colon N(S')\to \mathbb{R}$, $0\leq t\leq 1$, by
\begin{align*}
f_\tau &:= (1-\tau)\,f + \tau\,(\phi(4w)\,w + (1-\phi(4w))\,f)\\
g_\tau &:= (1-\tau)\,g + \tau\,(\phi(w)\,N\,t + (1-\phi(w))\,g).
\end{align*}
We use $f_\tau$ and $g_\tau$ to define a Liouville homotopy $\beta_\tau = \lambda_\Sigma + f_\tau\,dt + g_\tau\, dw$ on $N(S')$, extending trivially to the remainder of $W_R'$.  Indeed, we have
\[
d\beta_\tau = d\lambda_\Sigma + \left(\tfrac{\partial g_\tau}{\partial t}-\tfrac{\partial f_\tau}{\partial w}\right)dt\wedge dw,
\]
from which we see that $d\beta_\tau$ is symplectic if and only if $\tfrac{\partial g_\tau}{\partial t}-\tfrac{\partial f_\tau}{\partial w}>0$.  (Recall that $f$ and $g$ are functions of $t$ and $w$ alone.)  But notice that
\[
\tfrac{\partial g_\tau}{\partial t}-\tfrac{\partial f_\tau}{\partial w} = (1-\tau)\,\left(\tfrac{\partial g_0}{\partial t}-\tfrac{\partial f_0}{\partial w}\right) + \tau\,\left(\tfrac{\partial g_1}{\partial t}-\tfrac{\partial f_1}{\partial w}\right).
\]
Certainly $\tfrac{\partial g_0}{\partial t}-\tfrac{\partial f_0}{\partial w}>0$, and we also have
\[
\tfrac{\partial g_1}{\partial t}-\tfrac{\partial f_1}{\partial w} = \phi(w)\left(N-\tfrac{\partial g}{\partial t}\right) + \left(\tfrac{\partial g}{\partial t}-\tfrac{\partial f}{\partial w}\right) + 4\phi'(4w)\,(f-w) - \phi(4w)\left(1-\tfrac{\partial f}{\partial w}\right).
\]
When $|w|>\epsilon/8$ we have $\phi(4w)=\phi'(4w)=0$, and thus
\[
\tfrac{\partial g_1}{\partial t}-\tfrac{\partial f_1}{\partial w} = \phi(w)\left(N-\tfrac{\partial g}{\partial t}\right)+\left(\tfrac{\partial g}{\partial t}-\tfrac{\partial f}{\partial w}\right).
\]
By choosing $N>\tfrac{\partial g}{\partial t}$ we ensure that each summand is positive.  On the other hand, for $|w|\leq\epsilon/8$ we have $\phi(w)=1$ and thus
\[
\tfrac{\partial g_1}{\partial t}-\tfrac{\partial f_1}{\partial w} = N-\tfrac{\partial f}{\partial w} + 4\phi'(4w)\,(f-w) - \phi(4w)\left(1-\tfrac{\partial f}{\partial w}\right).
\]
We ensure the positivity of this expression by choosing $N\gg 0$.  Because both $\tfrac{\partial g_0}{\partial t}-\tfrac{\partial f_0}{\partial w}$ and $\tfrac{\partial g_1}{\partial t}-\tfrac{\partial f_1}{\partial w}$ are positive, the same is true of $\tfrac{\partial g_\tau}{\partial t}-\tfrac{\partial f_\tau}{\partial w}$, and we conclude that $d\beta_\tau$ is symplectic, for all $0\leq \tau\leq 1$.\\

At last, we verify the Liouville dynamics of $\beta_\tau$.  Because $\pm g\vert_{\{t=\pm(t_0+1)\}}>0$, we have $\pm g_\tau\vert_{\{t=\pm(t_0+1)\}}>0$, and thus the Liouville vector field of $\beta_\tau$ points transversely out of $W_R'$ along $\{t=\pm(t_0+1)\}$, for all $0\leq\tau\leq 1$.  When $\tau=1$ the Liouville vector field is given on $\{|w|\leq\epsilon/32\}$ by
\[
X = X_\Sigma + g_1\,\partial_t - f_1\,\partial_w = X_\Sigma + N\,t\,\partial_t - w\,\partial_w.
\]
Namely, by redefining $N(S')$ to be
\[
N(S') = [-(t_0+1),t_0+1] \times\Sigma \times [-\epsilon/32,\epsilon/32]
\]
and taking $\lambda=\iota^*_{N(S')}\beta_1$, we obtain the data desired by the lemma (with $\delta=\epsilon/32$).
\end{proof}

Lemma~\ref{lemma:exact-round-handle} allows us to realize $N(S')$ as a round symplectic 1-handle in case the original filling $(W,\omega)$ is exact.  In fact, this can be accomplished when $(W,\omega)$ is a weak symplectic filling.

\begin{lemma}\label{lemma:weak-round-handle}
Suppose that the original filling $(W,\omega)$ is weak.  Then, after adjustments of $S'$ and $W'_R$, there exist:
\begin{itemize}
	\item a neighborhood $N(S')=S'\times[-\delta,\delta]_w\subset W'_R$;
	\item a 1-form $\lambda=\lambda_\Sigma + \lambda_B$ on $N(S')$;
	\item a decomposition of $\partial N(S')$ into $\partial_{\mathrm{in}}N(S')=S'\times\{\pm\delta\}$ and $\partial_{\mathrm{out}}N(S')=(\partial S')\times[-\delta,\delta]$;
\end{itemize}
such that:
\begin{enumerate}
	\item $\{\pm(t_0+1)\}\times\Sigma\times[-\delta,\delta]\subset \partial_v W'_R$ and $[-(t_0+1),t_0+1]\times\partial\Sigma\times[-\delta,\delta] \subset \partial_h W'_R$;
	\item $\lambda_\Sigma$ is a Liouville form for $\Sigma$;
	\item $\lambda_B$ is a 1-form on $B=[-(t_0+1),t_0+1]\times[-\delta,\delta]$;
	\item up to a symplectic deformation, $d\lambda$ agrees with $\omega$ on $N(S')$;
	\item the Liouville vector field $X_\lambda$ points into $N(S')$ along $\partial_{\mathrm{in}}N(S')$ and out of $N(S')$ along $\partial_{\mathrm{out}}N(S')$.
\end{enumerate}
\end{lemma}

Lemma~\ref{lemma:weak-round-handle} is due to Hyunki Min and follows from the proof of Lemma~\ref{lemma:exact-round-handle}.  Specifically, if $(W,\omega)$ is a weak filling of its boundary, we may let $\beta$ be any 1-form on $N(S')$ with $d\beta=\iota_{N(S')}^*\omega$ and then repeat the proof of Lemma~\ref{lemma:exact-round-handle} to define $\lambda$, giving $N(S')$ the structure of a round symplectic 1-handle.  If our original symplectic filling is strong, however, then we wish to work with a 1-form $\beta$ defined on a neighborhood of $\partial W_R'\cup S'$ such that $d\beta=\iota^*_{N(\partial W_R'\cup S')}\omega$.  While a strong filling $(W,\omega)$ must carry such a primitive in a neighborhood of its boundary, nontrivial relative second homology may prevent the primitive from extending over a neighborhood of $S'$.  It is for this reason that Theorem~\ref{thm:main-thm} holds for exact and weak fillings, but not for strong symplectic fillings.\\

For concrete examples of the failure of Theorem~\ref{thm:main-thm} for strong fillings, see~\cite{min2022strongly}.  There, Min constructs weakly fillable rational homology spheres $(M,\xi)$ which admit mixed tori.  Our Theorem~\ref{thm:main-thm} will produce a round symplectic 1-handle in any filling $(W,\omega)$ of $(M,\xi)$, and removing this round handle from the filling will yield a weak filling of a new contact manifold $(M',\xi')$.  On the other hand, a result of \cite{ohta1999simple} (suggested by the earlier work \cite{eliashberg1991symplectic}) says that any weak symplectic filling of a rational homology sphere may be perturbed to a strong symplectic filling.  We may carry out this perturbation on $(W,\omega)$ but, in the case of Min's examples, not in a manner compatible with the round symplectic 1-handle in $(W,\omega)$.  Namely, Min's symplectic filling $(W,\omega)$ is obtained by attaching a round symplectic 1-handle to a symplectic filling of a rotative contact structure on a torus bundle; these contact structures are known to be weakly-but-not-strongly fillable (\cite{gay2006four,eliashberg1996unique}).\\

We now explain how to obtain $W$ from $W':=W'_R-N(S')$ using Theorem~\ref{Avdek2}.  We let $M'$ denote the contact boundary $\partial W'$.  By construction, $\partial_{\mathrm{in}}N(S')\cap M'\subset M'$ consists of two disjoint copies of $S'=[-(t_0+1),t_0+1]\times\Sigma$, with contact form given by $\lambda_\Sigma \pm \delta\,dt$, so that $\partial_t$ is parallel to the Reeb direction.  We then apply Theorem~\ref{Avdek2} to $M'$, using the Liouville embedding $\{0\}_t\times\Sigma\hookrightarrow S'$ for each of the two copies of $S'$.  As discussed in Section~\ref{Avdek}, the proof of Theorem~\ref{Avdek2} proceeds by attaching a symplectic handle to a collar neighborhood of $(M',\xi')$ in $(W',\omega')$. After attaching this handle we obtain $(W,\omega)$ with convex boundary $\#_{(\Sigma,\beta)}\ (M',\xi')$, as desired.\\

By construction, $(M',\xi')$ is the result of splitting $(M,\xi)$ along the mixed torus $T$ with some integer slope $s$, and our discussion in Section~\ref{subsec:tightness-splitting} ensures that $0\leq s\leq s_2-1$.  This completes the proof of Theorem~\ref{thm:main-thm}.

\section{Proof of Theorem~\ref{thm:twice-stabilized}}
We will now prove Theorem~\ref{thm:twice-stabilized} using Theorem~\ref{thm:main-thm}.  Throughout, $L$ is an oriented Legendrian knot in a closed, cooriented contact 3-manifold $(M,\xi)$ and $(M',\xi')$ is obtained from $(M,\xi)$ by Legendrian surgery on $S_+S_-(L)$.\\

The first step in applying Theorem~\ref{thm:main-thm} is to identify a mixed torus in $(M',\xi')$.  To this end, let us consider a standard neighborhood $N(S_-(L))\subset M$ of $S_-(L)$ and let $V_0$ be the solid torus obtained from $N(S_-(L))$ by Legendrian surgery along $S_+S_-(L)\subset N(S_-(L))$.  We may then let $V_1 = M - N(S_-(L))$, so that $M' = V_0 \cup V_1$.  The torus $T=\partial N(S_-(L))$ is a mixed torus because stabilizing twice with opposite signs is equivalent to attaching two bypasses with opposite signs.\\

Now let $(W,\omega)$ be an exact filling of $(M',\xi')$.  Theorem~\ref{thm:main-thm} then guarantees that we can decompose $W$ into a manifold $W'$ such that
\[
\partial W' = (S_0 \cup_{\psi_0} V_0) \sqcup (S_1 \cup_{\psi_1} V_1),
\]
where each $S_i$ is a solid torus, and $\psi_i\colon\partial S_i\to\partial V_i$ is an as-yet-undetermined map.  For $i=0,1$, let us write $M_i = S_i \cup_{\psi_i} V_i$.  Notice that, by construction, $\partial W'$ is disconnected.\\

We now claim that the maps $\psi_i$ are uniquely determined.  Take an oriented identification of $\partial N(S_-(L))$ with $T^2=\RR^2/\ZZ^2$ such that the meridian of $N(S_-(L))$ has slope 0 and $\Gamma_{\partial N(S_-(L))}$ has slope $\infty$.  The correspondence between stabilizations and basic slices gives us an embedding of $T^2\times[0,2]$ into $(M',\xi')$ such that $T^2\times\{i\}$ is identified with $\partial N(S_+S_-(L))$, $\partial N(S_-(L))$, and $\partial N(L)$ for $i=0,1$, and 2, respectively.  Moreover, our choice of identification of $\partial N(S_-(L))$ with $T^2$ ensures that $s_0=-1$, $s_1=\infty$, and $s_2=1$.  So Theorem~\ref{thm:main-thm} tells us that $\partial W'$ is the result of splitting $(M,\xi)$ along $\partial N(S_-(L))$ with slope 0.  That is, $M_0\simeq (S^3,\xi_{std})$ and $M_1=(M,\xi)$, and $M_0$ has a unique exact filling.\\

From \cite[Theorem 1.2]{mcduff1991symplectic} (also see work of Gromov \cite{gromov1985pseudo} and Eliashberg \cite{eliashberg1990filling}) we learn that $(S^3,\xi_{std})$ is not \emph{symplectically co-fillable}.  That is, there is no connected, symplectic manifold $(X,\omega)$ with disconnected convex boundary, one of whose boundary components is $(S^3,\xi_{std})$.  Since $\partial W'=(S^3,\xi_{std})\sqcup (M,\xi)$, we conclude that $W'$ is the disjoint union of a filling $(W_0,\omega_0)$ of $(M,\xi)$ and $(B^4,\omega_{std})$.\\

From Theorem~\ref{thm:main-thm} we know that the filling $(W,\omega)$ of $(M',\xi')$ is obtained from $(W',\omega')$ by attaching a round symplectic 1-handle.  Our final claim is that this corresponds to attaching a symplectic 2-handle to $(W_0,\omega_0)$ along $S_+S_-(L)$.  Indeed, round symplectic 1-handle attachment amounts to the attachment of a Weinstein 1-handle, followed by a Weinstein 2-handle passing over that 1-handle.  The 1-handle is attached along two copies of $B^3$, one taken from each of $S_0$ and $S_1$ in $W'$.  The effect of this is to "cancel" $B^4$, leaving us with $(W_0,\omega_0)$.  The solid tori $S_0$ and $S_1$ are joined by the 1-handle to form a single solid torus in $W_0$, the core curve of which is $S_+S_-(L)$.  The round 1-handle attachment is completed by attaching a symplectic 2-handle along this curve.  This proves Theorem~\ref{thm:twice-stabilized}. \qed

\bibliographystyle{alpha}
\bibliography{../references}
\end{document}